\newtheorem{theorem}{Theorem}
\theoremstyle{plain}
\newtheorem{lemma}[theorem]{Lemma}
\theoremstyle{definition}
\newtheorem{problem}[theorem]{Problem}
\newtheorem{remark}[theorem]{Remark}
\newtheorem{assumption}[theorem]{Assumption}
\def\eps{\varepsilon}
\def\e{\eps}
\def\dx{\partial_x}
\def\dxx{\partial_{xx}}
\def\dt{\partial_t}
\def\V{\mathcal{V}}
\def\dV{\mathcal{V}_\partial}
\def\E{\mathcal{E}}
\def\G{\mathcal{G}}
\def\T{\mathcal{T}}
\def\dT{\partial\mathcal{T}}
\def\eps{\varepsilon}
\def\Th{\mathcal{T}_h}
\def\Xh{\mathcal{X}_h}
\def\uup{u_h^{up}}
\def\wup{w_h^{up}}
\newcommand{\VVert}[1]{{\left\vert\kern-0.25ex\left\vert\kern-0.25ex\left\vert#1\right\vert\kern-0.25ex\right\vert\kern-0.25ex\right\vert}}
\title[Asymptotic preserving hybrid-dG method]{An asymptotic preserving hybrid-dG method for convection-diffusion equations on pipe networks}
\author{H. Egger$^{*,\dag}$ \and N. Philippi$^\dag$}
\address{$^*$Institute for Numerical Mathematics, Johannes-Kepler University Linz, Austria \\
$^\dag$Johann Radon Institute for Computational and Applied Mathematics, Linz, Austria}
\email{herbert.egger@jku.at}
\email{nora.philippi@ricam.oeaw.ac.at}
\begin{document}

\begin{abstract}
We study the numerical approximation of singularly perturbed convection-diffusion problems on one-dimensional pipe networks. In the vanishing diffusion limit, the number and type of boundary conditions and coupling conditions at network junctions changes, which gives rise to singular layers at the outflow boundaries of the pipes. A hybrid discontinuous Galerkin method is proposed, which provides a natural upwind mechanism for the convection-dominated case. Moreover, the method automatically handles the variable coupling and boundary conditions in the vanishing diffusion limit, leading to an asymptotic-preserving scheme. A detailed analysis of the singularities of the solution and the discretization error is presented, and an adaptive strategy is proposed, leading to order optimal error estimates that hold uniformly in the singular perturbation limit. The theoretical results are confirmed by numerical tests.
\end{abstract}

\maketitle

\vspace*{-1em}

\begin{quote}
\noindent 
{\small {\bf Keywords:} 
singular perturbation problems, vanishing diffusion limit, discontinuous Galerkin methods, asymptotic analysis, parameter robust error estimates}
\end{quote}

\begin{quote}
\noindent
{\small {\bf AMS-classification (2000):}
35B25, 
35B40, 
35K20, 
35R02, 
65N30, 
76R99 
}
\end{quote}









\section{Introduction} \label{sec:1}

We are interested in the numerical solution of convection-diffusion processes on one-dimensional pipe networks. Such problems describe, e.g., the contaminant transport in water supply networks and systems of 1D-cracks \cite{Laird05,Oppenheimer00}, or the distribution of energy in district heating networks \cite{Hauschild20}.
Related nonlinear problems have been studied in the context of traffic flow modelling; see \cite{Garavello06} for an introduction. 

\textbf{Problem setting and asymptotic analysis.}
%
On every single pipe of the network, the transport of matter is described by the convection-diffusion problem
\begin{alignat}{2}
    \dt u^\eps + b \dx u^\eps &= \eps \dxx u^\eps, \qquad && x \in (0,\ell), \ t>0,  \label{eq:1}\\
    u^\eps &= \hat g, \qquad && x \in \{0,\ell\}, \ t>0. \label{eq:2}
\end{alignat}
Here $u^\eps$ denotes the quantity of interest, e.g., the concentration of the contaminant, $b>0$ is the flow velocity, $\eps>0$ the diffusion coefficient, and $\hat g$ suitable boundary data.
%
In the vanishing diffusion limit $\eps \to 0$, the boundary condition at $x=\ell$ becomes obsolete. This leads to a boundary layer for $0 < \eps \ll 1$ accompanied by a blow-up of the derivatives of the solution, which can be expressed by
\begin{align} \label{eq:3}
    \|u^\eps\|_{L^\infty(0,t_{max};H^1(0,\ell))} \approx C/\sqrt{\eps}.
\end{align}
In contrast to that, the solution $u^\circ$ of the transport problem, which arises in the limit $\eps=0$, may be perfectly smooth within the pipe, although violating the boundary condition at $x=\ell$. 
Maximum principles allow to verify that both, $u^\eps$ and $u^\circ$, are bounded uniformly, and under appropriate assumptions, one can further show that
\begin{align} \label{eq:4}
    \|u^\eps - u^\circ\|_{L^\infty(0,t_{max};L^2(0,\ell))} \le C \sqrt{\eps}
\end{align}
with a uniform constant $C$; see e.g. \cite{Roos08}.
This asymptotic estimate suggests that the transport solution $u^\circ$ may serve as a good approximation for $u^\eps$ for small $0 < \eps \ll 1$. 

\textbf{Extension to networks.}
Additional coupling conditions are required to model the flow through junctions of more than two pipes and to determine the values $\hat g$ in \eqref{eq:2} at internal junctions.
The number and type of these conditions may change in the singular limit $\eps \to 0$, which gives rise to additional interior layers.
As elaborated in \cite{EggerCD20}, the blow-up of the solution and the asymptotic estimates for a single pipe, however, carry over almost verbatim to the network setting.
We refer to \cite{Guarguaglini21} for results concerning a general class of coupling conditions and to \cite{Barcena21} for related investigations concerning optimal control. Vanishing diffusion limits for scalar conservation laws in traffic networks are studied in \cite{Coclite10}.

\textbf{Numerical approximation.}
The discretization of singularly perturbed convection-diffusion problems is well studied in the literature; see \cite{Roos08} for an comprehensive survey. 
A key ingredient for the robust approximation is the use of layer-adapted 
meshes. Finite element approximations on Shishkin-type meshes were investigated in \cite{Constantinou15, Duran06, Roos97}; also see~\cite{Roos15} for the analysis of higher order schemes and \cite{Singh20,Xie10} for the investigation of discontinuous Galerkin methods on various layer-adapted meshes.
In this paper, we consider the spatial approximation by a \emph{hybrid discontinuous Galerkin} (dG) method; see \cite{Cockburn09,Ern11} for background material. 
Related methods for problems on multi-dimensional domains were investigated in \cite{Egger10, Fu15, Nguyen11} and by \cite{Chen19} in the context of optimal control.

\textbf{Main contributions.}
The proposed hybrid-dG method not only provides an upwind mechanism for handling the convection-dominated regime but, more importantly, allows a natural treatment of the coupling conditions at pipe junctions. 
In the singular limit $\eps = 0$, the method automatically reduces to the hybrid-dG scheme for pure transport on networks, which has been analysed in~\cite{Egger20}.
Hence, the method is formally \emph{asymptotic-preserving} in the vanishing diffusion limit.
By extension of previous results, we will establish uniform error estimates, 
which for a single pipe and approximations of order $k$ take the form
\begin{align} \label{eq:6}
\|u^\eps - \tilde u_h^\eps\|_{L^\infty(0,T;L^2(0,\ell))} 
\le C \max(h^{k+1},\min(\sqrt{\eps},h^k)).
\end{align}
The constant $C$ here only depends on the regularity of the boundary data, but is independent of $\eps$ and $h$. 
The approximation $\tilde u_h^\eps$ will be chosen adaptively as 
\begin{align*}
    \tilde u_h^\eps = \begin{cases} u_h^\eps, & \eps \ge h^{2k}, \\
    u_h^\circ, & \eps < h^{2k},\end{cases}
\end{align*}
where $u_h^\eps$ is the hybrid-dG approximation for the convection-diffusion problem on a layer-adapted mesh $\Th^\eps$ of Gartland-type \cite{Gartland88}, while $u_h^\circ$ is the hybrid-dG approximation \cite{Egger20} for the pure transport problem on a uniform mesh $\Th^\circ$. 
Following \cite{Gartland88,Roos97}, the mesh $\Th^\eps$ is chosen uniformly in the interval $(0,x^*(\eps))$ away from the layer, and geometrically refined within the layer $(x^*(\eps),\ell)$. As  transition point, we will chose
\begin{align}
x^*(\eps) \approx \eps \log(1/\eps) \lesssim \eps\log(1/h);
\end{align}
the second inequality only holds due to the condition $\eps \ge h^{2k}$. 
Using this observation, one can show that the number of elements in $\Th^\eps$ is of optimal order, i.e. $N \approx h^{-1}$. At the same time, the choice of the transition point $x^*(\eps)$ simplifies the convergence analysis significantly; see Section~\ref{sec:proof_main}.
Our results cover the case of a single pipe as well as finite networks of pipes with rather general topology, in particular, including cycles.  
Similar uniform convergence estimates also hold for fully discrete schemes obtained after time discretization by appropriate time stepping schemes, which will be demonstrated in numerical tests.

\textbf{Outline.}
In Section~\ref{sec:prob}, we introduce our notation and main assumptions, state the convection-diffusion and pure transport problems on networks, and then summarize some important properties of their solutions. 
The hybrid-dG method is proposed in Section~\ref{sec:HDG}, and we state a preliminary error estimate.
Section \ref{sec:main} contains our main convergence result and its proof. For completeness of the presentation, the proofs of some auxiliary technical results are included in the appendix.
For illustration of our theoretical results, we give numerical tests in Section~\ref{sec:num}.
The presentation closes with a short summary and remarks concerning possible extensions of our results.

\section{Problem statement} \label{sec:prob}

Let us start by introducing the relevant notation and then give a complete definition of the problems under consideration. After that, we state the main assumptions for our analysis and summarize the basic properties of solutions to the continuous problems.

\subsection{Notation}\label{subsec:not}

Following \cite{Egger20,Mugnolo14}, the network topology is described by a finite, directed and connected graph $\G=(\V,\E)$, with vertices $\V=\{v_1,\dots,v_n\}$ and edges $\E=\{e_1,\dots,e_m\} \subset \V \times \V$. 
We write $\E(v)=\{e \in \E: e=(v,\cdot) \ \text{or} \ e=(\cdot,v)\}$ for the set of edges incident to a vertex $v\in\V$, and $\dV=\{v\in\V:|\E(v)|=1\}$,  $\V_0=\V\backslash\dV$ for the sets of boundary and internal vertices. 
As usual, $|S|$ describes the cardinality of a finite set $S$. 
For any edge $e=(v^{in},v^{out})$, we define two values
\begin{align*}
n_{e}(v^{in}):=-1\quad\text{and}\quad n_{e}(v^{out}):=1
\end{align*} 
indicating the start and end point of the edge, and we set $n_e(v):=0$ for $v\in\V\backslash\{v^{in},v^{out}\}$.
We write $\mathcal{E}^{in}(v):=\{e\in\mathcal{E}: n_{e}(v)>0\}$ and $\mathcal{E}^{out}(v):=\{e\in\mathcal{E}:n_{e}(v)<0\}$ for the sets of edges pointing into and out of the vertex $v \in \V$, respectively. 
Furthermore, we split $\V_\partial$ into a set of boundary vertices $\mathcal{V}_{\partial}^{in}:=\{v\in\mathcal{V}_{\partial}:n_{e}(v)<0\ \text{for}\ e\in\mathcal{E}(v)\}$, from which edges leave into the network, and the complement $\mathcal{V}_{\partial}^{out}:=\{v\in\mathcal{V}_{\partial}:b_{e}n_{e}(v)>0\ \text{for}\ e\in\mathcal{E}(v)\}$, in which edges terminate; see Figure~\ref{fig:topology} for an illustration. 
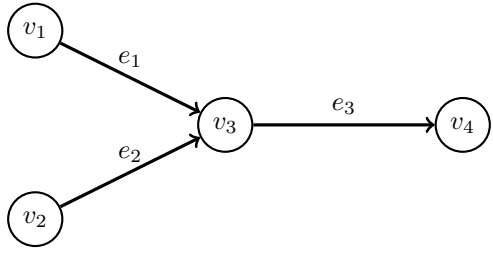
\begin{figure}[ht]
\centering
\small
\begin{tikzpicture}[scale=2.5]
\node (A) at (0,0.5) [circle,draw,thick] {$v_1$};
\node (B) at (0,-0.5) [circle,draw,thick] {$v_2$};
\node (C) at (1,0) [circle,draw,thick] {$v_3$};
\node (D) at (2.25,0) [circle,draw,thick] {$v_4$};

\draw[->, very thick] (A) to node[above] {$e_1$} (C);
\draw[->, very thick] (B) to node[above] {$e_2$} (C);
\draw[->, very thick] (C) to node[above] {$e_3$} (D);
\end{tikzpicture}
\caption{Network with edges $e_1=(v_1,v_3)$, $e_2=(v_2,v_3)$, and $e_3=(v_3,v_4)$, inner vertex $\V_0=\{v_3\}$, and boundary vertices $\V_\partial=\{v_1,v_2,v_4\}$.
The set $\mathcal{E}(v_3)=\{e_1,e_2,e_3\}$ contains all edges incident to the vertex $v_3$. This set can be split into $\E^{in}(v_3)=\{e_1,e_2\}$ and $\E^{out}(v_3)=\{e_3\}$ with the edges that point into or out of the vertex $v_3$, respectively.
The boundary vertices are split into $\V_{\partial}^{in}=\{v_1,v_2\}$ and $\V_\partial^{out}=\{v_4\}$ containing the vertices from which edges originate or in which edges terminate. 
}
\label{fig:topology}
\end{figure}
%
To any edge $e \in \E$, we associate a length $\ell_e$, and identify $e \simeq (0,\ell_e)$ with an interval.
By $L^2(e)=L^2(0,\ell_e)$ and 
\begin{align*}
L^2(\E)=L^2(e_1)\times\dots\times L^2(e_m)=\{u: u_e\in L^2(e)\ \text{for all}\ e\in\E\},
\end{align*}
we designate the spaces of square integrable functions on a pipe $e$ and the network $\E$, respectively, and we write $u_e=u\vert_e$ for the restriction of $u$ to the edge $e$. 
The norm and scalar product for the space $L^2(\E)$ are given by
\begin{align*}
\Vert u\Vert_{L^2(\E)}^2=\sum\nolimits_{e\in\E}\Vert u_{e}\Vert_{L^2(e)}^2\qquad\text{and}\qquad (u,w)_{L^2(\E)}=\sum\nolimits_{e\in\E}(u_{e},w_{e})_{L^2(e)}.
\end{align*}
We further define the broken Sobolev spaces
\begin{align*}
H_{pw}^k(\E)=\{u\in L^2(\E): u_{e}\in H^k(e)\ \text{for all}\ e\in\E\}, \qquad k \ge 0.
\end{align*}
Note that $H^0_{pw}(\E)=L^2(\E)$ and for $k \ge 1$ the functions $u\in H^k_{pw}(\E)$ are continuous along edges $e\in\E$, but may be discontinuous across junctions $v\in\V_0$. 
The sub-space of functions that are continuous also across junctions is denoted by $H^1(\E)$. 
Any $u\in H^1(\E)$ has unique values $u(v)$ for every $v\in\V$, and we write $\ell_2(\V)$ for the space of possible vertex values.

\subsection{Convection-diffusion problem}\label{sec:2.2}

We are now in the position to introduce the parabolic problem for $\eps>0$.
Along the pipes of the network, we assume
\begin{alignat}{2}
\dt u^\eps_e(x,t) + b_e\dx u^\eps_e(x,t) &= \eps \dxx u^\eps_e(x,t), \qquad &&x\in e, \ e \in \E,\ t>0.\label{convdiff:1} 
\end{alignat}
Like on a single pipe, we enforce Dirichlet conditions at the boundary vertices, i.e.,
\begin{alignat}{2}
u^\eps(v,t)&=\hat g_v(t), \qquad &&v\in\dV,\  t>0.\label{convdiff:2} 
\end{alignat}
At the interior vertices $v \in \V_0$, on the other hand, we require the coupling conditions
\begin{alignat}{2}
u^\eps(v,t) &= \hat{u}^\eps_v(t) , \qquad &&v\in\V_0,\ t>0,\label{convdiff:3} \\
\sum\nolimits_{e\in\mathcal{E}(v)}\big(b_e u^\eps_e(v,t)-\e \dx u^\eps_e(v,t)\big)n_e(v)&=0, \qquad &&  v\in\mathcal{V}_0,\ t>0, \label{convdiff:4}
\end{alignat}
which encode continuity of the density $u^\eps$ and conservation of mass across network junctions. 
The conditions \eqref{convdiff:3} and \eqref{convdiff:4} make up $|\E(v)|+1$ coupling conditions at each interior vertex $v\in\V_0$, corresponding to the number of all incident edges and the additional unknown vertex value $\hat u_v^\eps$, which is called \emph{hybrid variable} in the following.

\subsection{Limiting transport problem}

In the vanishing diffusion limit $\eps=0$, the flow on the pipes is described by the hyperbolic transport equation
\begin{alignat}{2}
\dt u_e^\circ(x,t)+b_e\dx u_e^\circ(x,t) &=0, \qquad && x\in e,\ e \in \E, \  t>0.\label{transp:1} 
\end{alignat}
We can now prescribe Dirichlet data only at the inflow boundary vertices, i.e., 
\begin{alignat}{2}
u_e^\circ(v,t) &= \hat{g}_v(t), \qquad \qquad && v=\dV^{in}, \ t>0.\label{transp:2}
\end{alignat}
The coupling across network junctions is further described by
\begin{alignat}{2}
u_e^\circ(v,t) &= \hat u_v^\circ(t), \qquad \qquad  \qquad \qquad v\in \V_0, \ e \in \E^{out}(v), \ t>0, \label{transp:3} \\
\sum\nolimits_{e\in\E^{in}(v)}b_e\hat{u}_v^\circ(t)n_e(v)&=\sum\nolimits_{e\in\E^{in}(v)}b_e u_e^\circ(v,t)n_e(v), \qquad  v\in\V_0 \cup \dV^{out}, \ t>0.\label{transp:4}
\end{alignat}
Condition \eqref{transp:3} fixes the densities at the inflow vertices of the pipes $e \in \E^{out}(v)$ to the vertex value $\hat u_v^\circ$, which is determined by the mixing rule \eqref{transp:4} as a convex combination of the values $u_e^\circ(v)$ coming from the edges  $e\in\E^{in}(v)$ pointing into the vertex $v$.
In summary, this makes up $|\E^{out}(v)|+1$ coupling conditions which determine the values $u_e^\circ(v)$ for the edges $e \in \E^{out}(v)$ originating from $v$ and the vertex value $\hat{u}_v^\circ$. 
Let us emphasise that the number and type of coupling conditions is different from the parabolic case $\eps>0$ above, leading to additional interior layers for vanishing diffusion $\eps\rightarrow 0$; see \cite{EggerCD20} and below.

\subsection{Basic assumption and preliminary results}

For the rest of the presentation, we make use of the following assumptions on the problem data.
\begin{assumption}\label{ass:1}
Let $0 < \eps \le 1$ and $0 < \underline b \le b_e \le \bar b$ for all $e\in\E$
as well as
\begin{align} 
\sum\nolimits_{e\in\E(v)}b_en_e(v)=0, \qquad v\in\V_0.\label{eq:b} 
\end{align}
Furthermore, the boundary data in \eqref{convdiff:3} and \eqref{transp:3} shall satisfy $\hat g\in C^{m+2}(0,t_{max};\ell^2(\dV))$ for some time horizon $t_{max}>0$, with $\dt^n \hat g(0)=0$ for $0\le n\le m$ and some $m\ge0$.
\end{assumption}
The assumptions on $b$ characterize a steady background flow which, for ease of notation, is aligned with the orientation of the edges.
Condition \eqref{eq:b} together with the coupling conditions ensures conservation of mass at interior vertices. 
The boundary data are consistent with trivial initial conditions $u^\eps(0)=0$, and hence the occurrence of initial layers is avoided. 
For later reference, we summarize some basic results about solvability and regularity of solutions for our two model problems. 

\begin{lemma}\label{lem:deriv}
Let Assumption \ref{ass:1} hold. 
Then for any $\eps>0$, the convection--diffusion problem \eqref{convdiff:1}--\eqref{convdiff:4} has a unique solution $(u^\eps,\hat u^\eps)$ with initial value $u^\eps(0)=0$, and 
\begin{align} \label{eq:regularity}
    u^\eps\in C^{m+1}(L^2(\E))\cap C^0(H_{pw}^{2m+2}(\E)),\qquad \hat u^\eps \in C^{m+1}(\ell_2(\V_0)),
\end{align}   
and the derivatives of $u^\eps$ are bounded by
\begin{align}\label{eq:deriv}
|\dt^n\dx^j u^\eps_e(x,t)| \leq C\, (1+\e^{-j} e^{-b_e(\ell_e-x)/\eps})
\end{align}
for all $x\in (0,\ell_e)$, $e\in\E$, $t>0$ and $n\le m,\, j\le 2(m-n)+1$.
Furthermore, also the transport problem \eqref{transp:1}--\eqref{transp:4}
has a unique solution with initial value $u^\circ(0)=0$, and 
\begin{align}\label{eq:regularity_transport}
    u^\circ\in C^{m+1}(L^2(\E))\cap C^0(H_{pw}^{m+1}(\E)),\qquad \hat u^\circ \in C^{m+1}(\ell_2(\V_0)),
\end{align}
and the asymptotic estimate
\begin{align}\label{eq:asympt_estimate_network}
    \|u^\eps - u^\circ\|_{L^\infty(0,t_{max};L^2(\E))} \le C' \sqrt{\eps}
\end{align}
holds true.
The constants $C$, $C'$ only depend on the bounds in Assumption \ref{ass:1}. 
Here $C^{m}(X)=C^{m}([0;t_{max}];X)$ is the space of smooth functions on $[0,t_{max}]$ with values in $X$, and $t_{max}>0$ the chosen time horizon.
\end{lemma}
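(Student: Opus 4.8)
The plan is to establish the four assertions of the lemma in turn, following the single–pipe theory of \cite{Roos08} and its network extension in \cite{EggerCD20}, and only sketching the arguments while highlighting the steps specific to the network coupling. For the parabolic problem I would first homogenise the Dirichlet data: lifting the boundary values at $\dV$ by a function $G^\eps$ that inherits the regularity $C^{m+2}$ of $\hat g$, the system \eqref{convdiff:1}--\eqref{convdiff:4} is reduced to one with zero data at $\dV$, the same coupling conditions \eqref{convdiff:3}--\eqref{convdiff:4} at $\V_0$, and a right–hand side $f^\eps=\eps\dxx G^\eps-\dt G^\eps-b\dx G^\eps$. The spatial operator $A^\eps u=-\eps\dxx u+b\dx u$, with domain built from $H^2_{pw}(\E)$ together with \eqref{convdiff:3}--\eqref{convdiff:4}, is the network analogue of a one–dimensional elliptic operator and is sectorial on $L^2(\E)$ (cf. \cite{Mugnolo14}), hence generates an analytic semigroup; this gives the unique solution $(u^\eps,\hat u^\eps)$ with $u^\eps(0)=0$. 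Since $\dt^n\hat g(0)=0$ for $0\le n\le m$, the inhomogeneity $f^\eps$ and its first $m$ time derivatives vanish at $t=0$, so the standard parabolic bootstrap yields $u^\eps\in C^{m+1}(L^2(\E))$; differentiating the equation $n$ times in time and invoking edgewise elliptic regularity then trades each time derivative for two spatial derivatives and gives $u^\eps\in C^0(H^{2m+2}_{pw}(\E))$ and $\hat u^\eps\in C^{m+1}(\ell_2(\V_0))$, which is \eqref{eq:regularity}.

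The estimate \eqref{eq:deriv} is the technical core, and this is the step I expect to be the main obstacle, since the constants must stay independent of $\eps$ while the bounds are propagated through the vertex coupling. I would argue in two steps. First, for $n\le m$ the function $w_n:=\dt^n u^\eps$ solves the same convection–diffusion problem with boundary data $\dt^n\hat g$, the coupling conditions \eqref{convdiff:3}--\eqref{convdiff:4}, and — because of the compatibility conditions — zero initial data; a maximum principle on the network, available precisely because \eqref{eq:b} reduces the flux condition \eqref{convdiff:4} at a vertex extremum to $\sum_{e\in\E(v)}\dx u_e(v)\,n_e(v)=0$, gives $\|w_n\|_{L^\infty}\le C$ with $C$ depending only on $\|\hat g\|_{C^m}$ and the bounds in Assumption~\ref{ass:1}; this settles the case $j=0$. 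Second, for fixed $t$ one regards $w_n$ as the solution of the two–point boundary value problems $\eps\dxx w_n-b_e\dx w_n=\dt w_n$ on the edges, with vertex data from \eqref{convdiff:3}--\eqref{convdiff:4}; since the homogeneous solutions of $\eps\partial_{xx}-b_e\partial_x$ are $1$ and $e^{b_ex/\eps}$, the Green's–function representation splits $w_n$ into a part bounded together with its first $2(m-n)$ spatial derivatives and a boundary–layer part proportional to $e^{-b_e(\ell_e-x)/\eps}$ whose $j$-th derivative carries the factor $\eps^{-j}$, with amplitude controlled by the uniformly bounded vertex values. Equivalently one differentiates the equation $j$ times and compares with the barrier $C(1+\eps^{-j}e^{-b_e(\ell_e-x)/\eps})$. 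The right–hand side $\dt w_n=\dt^{n+1}u^\eps$ is bounded via the first step when $n<m$, and in the remaining case $n=m$, $j\le1$, one uses the edgewise energy bound $\eps\|\dx w_m\|_{L^2(e)}^2\le C$ instead; this book-keeping is exactly why the admissible range is $j\le 2(m-n)+1$.

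For the transport problem, existence, uniqueness and the regularity \eqref{eq:regularity_transport} follow from the method of characteristics: on each edge $u^\circ_e$ is transported from its inflow vertex with speed $b_e$, the inflow value being $\hat g_v$ at $v\in\dV^{in}$ and, at $v\in\V_0$, the flux–weighted average $\hat u^\circ_v$ of the incoming values fixed by \eqref{transp:3}--\eqref{transp:4}; for a graph with cycles one first solves on a short interval $[0,\delta]$, where finite propagation speed makes the dependence triangular, and iterates, which also transfers $\hat g\in C^{m+2}$ into $u^\circ\in C^{m+1}(L^2(\E))\cap C^0(H^{m+1}_{pw}(\E))$ and $\hat u^\circ\in C^{m+1}(\ell_2(\V_0))$. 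Finally, for \eqref{eq:asympt_estimate_network} I would introduce a boundary–layer corrector supported near the outflow ends of the edges,
\[ \Pi^\eps_e(x,t):=\pi_e(t)\,e^{-b_e(\ell_e-x)/\eps}, \qquad e\in\E, \]
where $\pi_e(t)$ is the mismatch between the outflow value that the conditions \eqref{convdiff:2}--\eqref{convdiff:4} prescribe at the end of $e$ and the value $u^\circ_e(\ell_e,t)$ carried by transport. One checks that $e^{-b_e(\ell_e-x)/\eps}$ lies in the kernel of $\eps\partial_{xx}-b_e\partial_x$, that $\|\Pi^\eps\|_{L^2(\E)}\le C\sqrt\eps$, and — using \eqref{eq:b} together with the mixing rule \eqref{transp:4} — that $u^\circ+\Pi^\eps$ satisfies the boundary and coupling conditions \eqref{convdiff:2}--\eqref{convdiff:4} up to residuals of order $\eps$ (plus exponentially small terms), while its initial value vanishes. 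Then $z^\eps:=u^\eps-u^\circ-\Pi^\eps$ solves a convection–diffusion problem on the network with these small residuals and right–hand side $-\eps\dxx u^\circ+\dot\pi_e\,e^{-b_e(\ell_e-x)/\eps}$, of order $\sqrt\eps$ in $L^2(\E)$. Testing with $z^\eps$, using that the boundary and coupling contributions coming from the dissipative structure (the same structure underlying the maximum principle) are nonnegative and that $|\eps(\dxx u^\circ,z^\eps)|\le\tfrac{\eps}{2}\|\dx z^\eps\|^2+\tfrac{\eps}{2}\|\dx u^\circ\|^2$ after integration by parts, and applying Gronwall's lemma, yields $\|z^\eps\|_{L^\infty(0,t_{max};L^2(\E))}\le C\sqrt\eps$; the triangle inequality with $\|\Pi^\eps\|_{L^2(\E)}\le C\sqrt\eps$ then gives \eqref{eq:asympt_estimate_network}. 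In every step the constants depend only on the data bounds in Assumption~\ref{ass:1}, as claimed.
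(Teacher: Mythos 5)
Most of this lemma is, in the paper, delegated to references: existence, uniqueness and regularity come from semigroup theory \cite{Mugnolo14,Dorn10}, and the asymptotic estimate \eqref{eq:asympt_estimate_network} is quoted from \cite{EggerCD20}. Your sketches for those parts (sectorial operator and bootstrap; characteristics with short-time iteration for cycles; layer corrector plus energy/Gronwall) are consistent with the standard arguments and with what those references do. The one part the paper actually proves in detail is the derivative bound \eqref{eq:deriv} (Appendix~\ref{sec:thm:deriv}), and this is where your argument has genuine gaps.

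First, the amplitude of the layer term. You write that the boundary-layer part of $\dt^n\dx^j u^\eps$ has ``amplitude controlled by the uniformly bounded vertex values,'' but the relevant vertex values are those of the \emph{higher spatial derivatives} at the outflow end, and these are exactly the quantities that blow up like $\eps^{-j}$ --- they are not uniformly bounded and must be estimated. The paper does this by a separate argument: a mean value theorem on an interval of length $\eps$ adjacent to $\ell_e$, combined with the PDE and the fundamental theorem of calculus, yields $|\dt^n\dx^j u^\eps_e(\ell_e)|\le c\,\eps^{-j}$; without some such step the layer amplitude is unjustified. Second, the induction is not actually set up. From the second-order problem $\eps\dxx w_n-b_e\dx w_n=\dt w_n$ with a merely \emph{bounded} right-hand side, no Green's function representation can deliver bounds on $2(m-n)$ spatial derivatives of the ``smooth part'': controlling $\dx^j w_n$ requires controlling spatial derivatives of the source $\dt^{n+1}u^\eps$, which is precisely the quantity being estimated at the next level. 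The paper resolves this by inducting on $j$ using the first-order relation $b_e\,\dt^n\dx^j u^\eps-\eps\,\dt^n\dx^{j+1}u^\eps=-\dt^{n+1}\dx^{j-1}u^\eps$ and variation of constants, where the right-hand side is covered by the induction hypothesis at level $(n+1,j-1)$; this is the book-keeping that produces the range $j\le 2(m-n)+1$, and you name it but never carry it out. Third, for $n=m$, $j=1$ the energy bound $\eps\|\dx w_m\|_{L^2(e)}^2\le C$ only gives $\|\dx w_m\|_{L^2(e)}\le C\eps^{-1/2}$ and cannot yield the claimed pointwise bound $|\dx w_m(x)|\le C(1+\eps^{-1}e^{-b_e(\ell_e-x)/\eps})$; this case too has to go through the first-order ODE argument with the $\eps^{-1}$ outflow value established separately.
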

%
Existence, uniqueness and regularity of the solutions follow readily by semi-group theory; see \cite{EggerCD20,Mugnolo14} and \cite{Dorn10,Egger20} for details. 
%
%
The asymptotic estimate \eqref{eq:asympt_estimate_network} was proven in \cite{EggerCD20}. 
The bounds for the derivatives can be established with similar arguments as in \cite{Kellogg78, Rao17, Stynes89}. For convenience of the reader, a detailed proof of \eqref{eq:deriv} for the problem on networks is presented in Appendix~\ref{sec:thm:deriv}. 

\section{The hybrid discontinuous Galerkin method} \label{sec:HDG}

We now turn to the discretization of the problems introduced in the previous section. 
In our analysis, we will only consider the semi-discretization in space. In combination with appropriate time-stepping schemes, all results also carry over to fully discrete approximations. 
Corresponding remarks and numerical tests will be presented in Section~\ref{sec:num}. 

\subsection{Mesh and approximation spaces}\label{subsec:HDGnot}

We split every edge $e \simeq (0,\ell_e)$ into appropriate sub-intervals. The global mesh is then given by 
\begin{align*}
\mathcal{T}_h&=\{T_e^i=(x_{e}^{i-1},x_{e}^{i}):i=1,\dots,M_{e}, \ e\in\E\},
\end{align*}
with $0 = x_{e}^0 < x_e^1 < \ldots < x_{e}^{M_{e}}=\ell_{e}$ denoting the mesh points on the edge $e$. 
We write $h_e^i=x^{i}_{e}-x^{i-1}_{e}$ and $h=\max_{e,i}\, h^i_{e}$ for the local and global mesh size, and also use $h_T$ for the size of an element $T\in\T_h$ below.
The extremal points $x_e^0$ and $x_e^{M_e}$ on every edge are identified with vertices $v \in \V$ of the graph $\G(\V,\E)$, and we denote by 
\begin{align*}
\mathcal{X}_h = \{ x_e^i : 0< i < M_e, \ e \in \E\} 
\end{align*}
the remaining mesh points in the interior of the edges. 
Note that the mesh $(\Th,\Xh)$ could also be interpreted as a refinement $\G_h=(\V \cup \Xh, \Th)$ of the original graph $\G=(\V,\E)$.
In accordance with the notation of Section~\ref{sec:prob}, we define broken Sobolev spaces 
\begin{align*}
H_{pw}^k(\mathcal{T}_h)=\{w\in L^2(\E):w|_T\in H^k(T)\ \text{for all}\ T \in\mathcal{T}_h\}.
\end{align*}
For the approximation of solutions to our two model problems, we consider the spaces
\begin{align*}
W_h&=\{w_h\in L^2(\E):w_h|_{T}\in P_k(T)\ \text{for all}\ T\in\mathcal{T}_h\},
\end{align*} 
consisting of all piecewise polynomials of degree $\le k$ over the mesh $\Th$. 
In addition, we will make use of the space of hybrid variables
\begin{align*}
    \hat{W}_h = \{\hat w_h\in\ell_2(\V\cup \Xh):\, \hat w_h(v)=0\ \forall\, v\in\dV\}
\end{align*}
to represent the vertex values at the interior mesh points.  
For convenience of notation, let us introduce the grid dependent scalar products
\begin{align*}
(u,w)_{\mathcal{T}_h} =\sum\nolimits_{T\in\mathcal{T}_h}(u,w)_{L^2(T)},\qquad 
\langle u,w\rangle_{\partial\!\T_h} =\sum\nolimits_{T_e^i\in\T_h} u(x^{i-1}_e)w(x^{i-1}_e)+u(x^{i}_e)w(x^{i}_e), 
\end{align*}
as well as the associated norms $\| w\|_{\mathcal{T}_h}^2=(w,w)_{\mathcal{T}_h}$ and $| w|_{\partial\!\mathcal{T}_h}^2=\langle w,w\rangle_{\partial\!\mathcal{T}_h}$. 

\subsection{An asymptotic preserving discretization method}

For the numerical approximation of solutions to the convection--diffusion problem on networks \eqref{convdiff:1}--\eqref{convdiff:4}, as well as the limiting transport problem \eqref{transp:1}--\eqref{transp:4}, we consider the following discretization scheme. 
\begin{problem} \label{prob:convdiff} 
Let $W_h$ and $\hat W_h$ be defined as above with polynomial degree $k\ge 1$ fixed. 
Find $u_h^\eps\in C^1([0,t_{max}]; W_h)$ with $u_h^\eps(0)=0$ and $\hat{u}_h^\eps\in C^0([0,t_{max}];\hat{W}_h)$, such that  
\begin{align}\label{hdg:convdiff}
(\dt u_h^\eps(t),w_h)_{\T_h}+b_h(u_h^\eps(t),\hat{u}_h^\eps(t);w_h,\hat{w}_h)
+\eps d_h(u_h^\eps(t),\hat{u}_h^\eps(t);w_h,\hat{w}_h)
&= \ell_h^\eps(t;w_h) 
\end{align}	
for all $w_h\in W_h$, $\hat{w}_h\in\hat{W}_h$, and $0 \le t \le t_{max}$,
with bilinear and linear forms defined by
\begin{align}
b_h(u_h^\eps,\hat{u}_h^\eps;w_h,\hat{w}_h) =& - (bu_h^\eps,\dx w_h)_{\T_h}+\langle n b \, \uup,w_h-\hat{w}_h\rangle_{\dT_h},\label{def:bh}\\
d_h(u_h^\eps,\hat{u}_h^\eps;w_h,\hat{w}_h) =&\ (\dx u_h^\eps,\dx w_h)_{\T_h}
- \langle n \dx u_h^\eps,w_h-\hat{w}_h\rangle_{\dT_h} \label{def:dh}\\
&\quad\ + \langle n (u_h^\eps-\hat{u}_h^\eps),\dx w_h\rangle_{\dT_h} 
+ \langle\tfrac{\alpha}{h_{loc}}(u_h^\eps-\hat{u}_h^\eps),w_h-\hat{w}_h\rangle_{\partial \T_h},\nonumber\\
\ell_h^\eps(t;w_h) =& -\langle nb \hat g(t), w_h\rangle_{\dV^{in}} - \langle n \eps \hat g(t), \dx w_h  \rangle_{\dV} + \langle \tfrac{\alpha \eps}{h_{loc}} \hat g(t), w_h \rangle_{\dV}.
\end{align} 
Here $n b\, \uup=\max(n b,0) u_h^\eps + \min(n b,0) \hat{u}_h^\eps$ denotes the convective upwind flux at the vertices, $h_{loc}|_T=h_T$ for $T\in\T_h$, and $\alpha>0$ is a stabilization parameter for the diffusive jump terms; see \cite{Egger10} for a similar definition of the upwind value $\uup$.
\end{problem}
%
Using standard arguments, see e.g. \cite{Ern11,Thomee07}, one can obtain the following local error estimate. For convenience of the reader, a complete proof is provided in Appendix~\ref{sec:proof_prelim}. 
\begin{lemma}\label{lemma:prelim_error}
Let Assumption \ref{ass:1} hold and $(u^\eps,\hat u^\eps)$ be the solution of \eqref{convdiff:1}--\eqref{convdiff:4} with initial value $u^\eps(0)=0$.
Further, let $(u_h^\eps,\hat u_h^\eps)$ be the corresponding solution of Problem~\ref{prob:convdiff}.
Then
\begin{align}\label{eq:prelim_error}
\|u^\eps(t)-u_h^\eps(t)\|_{L^2(\E)}^2\le C\sum\nolimits_{T\in\T_h}(\eps h_T^{2k} + h_T^{2k+2}) \|u^\e\|_{H^1(0,t_{max};H^{k+1}(T))}^2
\end{align}
for all $0<t<t_{max}$ with constant $C$ independent of $\eps$ and $\Th$.
\end{lemma}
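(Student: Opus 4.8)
The plan is to follow the standard projection-based error analysis for hybrid discontinuous Galerkin methods, combining a consistency argument, coercivity of the diffusive form, and a Gr\"onwall estimate. First I would introduce the (elementwise) $L^2$-orthogonal projection $\Pi_h : L^2(\E) \to W_h$ onto piecewise polynomials of degree $k$, together with an appropriate projection $\hat\Pi_h$ onto the hybrid space $\hat W_h$ (e.g.\ nodal evaluation at interior mesh points). Writing the error as
\begin{align*}
u^\eps - u_h^\eps = (u^\eps - \Pi_h u^\eps) + (\Pi_h u^\eps - u_h^\eps) =: \rho + \theta_h,
\end{align*}
the term $\rho$ is controlled directly by standard approximation theory, which gives $\|\rho\|_{L^2(T)} \lesssim h_T^{k+1}\|u^\eps\|_{H^{k+1}(T)}$ and similar bounds for the boundary and derivative terms appearing in $b_h$ and $d_h$. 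The genuinely discrete part $\theta_h \in W_h$ is then estimated using the discrete equation.

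\textbf{Key steps.}
The core computation is to test the error equation with $(w_h,\hat w_h) = (\theta_h, \hat\theta_h)$. Using Galerkin orthogonality — the discrete scheme \eqref{hdg:convdiff} is consistent with \eqref{convdiff:1}--\eqref{convdiff:4}, since the exact solution satisfies the same variational identity with the fluxes reducing to the exact traces — one obtains
\begin{align*}
(\dt\theta_h,\theta_h)_{\T_h} + b_h(\theta_h,\hat\theta_h;\theta_h,\hat\theta_h) + \eps\, d_h(\theta_h,\hat\theta_h;\theta_h,\hat\theta_h)
= -(\dt\rho,\theta_h)_{\T_h} - b_h(\rho,\hat\rho;\theta_h,\hat\theta_h) - \eps\, d_h(\rho,\hat\rho;\theta_h,\hat\theta_h).
\end{align*}
For the left-hand side one uses: (i) $(\dt\theta_h,\theta_h)_{\T_h} = \tfrac12\ddt\|\theta_h\|_{\T_h}^2$; (ii) the convective form $b_h$ with the upwind flux is semi-definite, $b_h(\theta_h,\hat\theta_h;\theta_h,\hat\theta_h) \ge 0$, up to a controllable lower-order jump contribution (the standard upwinding dissipation); (iii) the diffusive form is coercive for $\alpha$ large enough, $d_h(\theta_h,\hat\theta_h;\theta_h,\hat\theta_h) \gtrsim \|\dx\theta_h\|_{\T_h}^2 + |h_{loc}^{-1/2}(\theta_h-\hat\theta_h)|_{\dT_h}^2 =: \VVert{(\theta_h,\hat\theta_h)}^2$, which follows from a discrete trace (inverse) inequality. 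On the right-hand side, the term $-(\dt\rho,\theta_h)_{\T_h}$ is handled by Cauchy--Schwarz and Young's inequality; the convective term $b_h(\rho,\hat\rho;\theta_h,\hat\theta_h)$ is bounded, after integration by parts on each element to move the derivative off $\theta_h$, by $C\,\|b\|_\infty\big(\|\rho\|_{\T_h} + |h_{loc}^{1/2}\rho|_{\dT_h}\big)\VVert{(\theta_h,\hat\theta_h)}$-type quantities — here one crucially uses $\langle n b\,\rho^{up}, \cdot\rangle$ and that $\Pi_h$ can be chosen with optimal boundary approximation; the diffusive term is bounded by $C\eps\,\|\rho\|_{H^1\text{-type}}\VVert{(\theta_h,\hat\theta_h)}$. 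Absorbing the $\VVert{\cdot}$ factors into the coercivity term via Young's inequality (with weights $1$ for the $O(1)$ convective part and $\eps$ for the diffusive part), and using that the coefficient of $\|\dx\theta_h\|^2$ comes with a factor $\eps$, one arrives at
\begin{align*}
\ddt\|\theta_h(t)\|_{\T_h}^2 \le C\|\theta_h(t)\|_{\T_h}^2 + C\sum\nolimits_{T\in\T_h}(\eps h_T^{2k} + h_T^{2k+2})\,\|u^\eps\|_{H^1(0,t_{max};H^{k+1}(T))}^2,
\end{align*}
where the $\eps h_T^{2k}$ term originates from the diffusive-projection estimate $\eps\|\dx\rho\|_{L^2(T)}^2 \lesssim \eps h_T^{2k}\|u^\eps\|_{H^{k+1}(T)}^2$ and the $h_T^{2k+2}$ term from the convective and time-derivative contributions. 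A Gr\"onwall argument on $[0,t]$, together with $\theta_h(0)=0$ (since $u_h^\eps(0)=0=\Pi_h u^\eps(0)$), and finally the triangle inequality $\|u^\eps-u_h^\eps\| \le \|\rho\| + \|\theta_h\|$ with $\|\rho(t)\|_{L^2(T)}^2 \lesssim h_T^{2k+2}\|u^\eps\|_{H^{k+1}(T)}^2$, yields \eqref{eq:prelim_error}.

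\textbf{Main obstacle.}
The step I expect to require the most care is the treatment of the \emph{interelement boundary terms}: making sure that, after integrating by parts in $b_h(\rho,\hat\rho;\theta_h,\hat\theta_h)$ and handling the $\langle n\dx\rho, \theta_h-\hat\theta_h\rangle$ and $\langle n(\rho-\hat\rho),\dx\theta_h\rangle$ pieces of $\eps d_h$, every boundary quantity is bounded by element-local approximation errors times the coercivity seminorm $\VVert{(\theta_h,\hat\theta_h)}$, \emph{with the $\eps$-scaling tracked correctly} so that the diffusive contributions genuinely produce the factor $\eps h_T^{2k}$ and not $h_T^{2k}$. This requires (a) choosing the projections $\Pi_h,\hat\Pi_h$ so that the relevant boundary error terms vanish or are of optimal order — in particular that $\hat\rho = \hat u^\eps - \hat\Pi_h \hat u^\eps$ matches the trace of $\rho$ at mesh points — and (b) carefully applying discrete trace and inverse inequalities (with the correct powers of $h_T$) to absorb $\|\dx\theta_h\|_{\T_h}$ and $|h_{loc}^{-1/2}(\theta_h-\hat\theta_h)|_{\dT_h}$ into $\eps\,d_h(\theta_h,\hat\theta_h;\theta_h,\hat\theta_h)$. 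Since the full details are routine but lengthy, and essentially mirror \cite{Ern11,Egger10}, I would relegate them to the appendix as the excerpt already indicates.
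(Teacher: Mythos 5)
Your overall architecture (projection error plus discrete error, consistency, testing with the discrete error, stability of $b_h$ and $d_h$, Gr\"onwall) is the same as the paper's, but there is a genuine gap at exactly the point you flag as the ``main obstacle'': the convective consistency term $b_h(\rho,\hat\rho;\theta_h,\hat\theta_h)=-(b\rho,\dx\theta_h)_{\T_h}+\langle nb\,\rho^{up},\theta_h-\hat\theta_h\rangle_{\dT_h}$. Your plan is to bound it by $C(\|\rho\|_{\T_h}+|h_{loc}^{1/2}\rho|_{\dT_h})\,\VVert{(\theta_h,\hat\theta_h)}$ and absorb into the coercivity ``with weight $1$''. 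But the only part of $\VVert{(\theta_h,\hat\theta_h)}$ available with an $O(1)$ coefficient on the good side is the upwind jump dissipation $\tfrac12|b^{1/2}(\theta_h-\hat\theta_h)|_{\dT_h}^2$; the terms $\|\dx\theta_h\|_{\T_h}^2$ and $|h_{loc}^{-1/2}(\theta_h-\hat\theta_h)|_{\dT_h}^2$ only appear with the prefactor $\eps$. So any convective contribution that must be absorbed against $\|\dx\theta_h\|$ costs a factor $\eps^{-1}$, and the alternative of integrating by parts to put the derivative on $\rho$ produces $\|\dx\rho\|_{L^2(T)}\lesssim h_T^k\|u^\eps\|_{H^{k+1}(T)}$ \emph{without} a compensating $\eps$, i.e.\ a term $h_T^{2k}$ instead of $\eps h_T^{2k}$. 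Either way the claimed bound \eqref{eq:prelim_error} is lost; in particular at $\eps=0$ you would only recover $O(h^k)$ instead of the $O(h^{k+1})$ needed in Case~1 of Theorem~\ref{thm:error}.

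The paper avoids this by not using the elementwise $L^2$-orthogonal projection onto $P_k$ but the upwind (Radau-type) projection of \eqref{def:proj1}--\eqref{def:proj2}: orthogonality to $P_{k-1}$ kills the volume term $(b\rho,\dx\theta_h)_{\T_h}$ (since $b$ is constant per edge and $\dx\theta_h\in P_{k-1}$), and nodal exactness at the \emph{downwind} endpoint of each element, together with $\hat\rho=0$, gives $\rho^{up}\equiv 0$, so the entire convective term is exactly zero and nothing needs to be absorbed. Your $L^2$ projection also kills the volume term, but its nodal trace error is only $O(h^{k+1/2})$, so the boundary term leaves an $O(h_T^{2k+1})$ residue after Young's inequality against the upwind dissipation --- half an order short of the $h_T^{2k+2}$ in \eqref{eq:prelim_error}, which again degrades the $h^{k+1}$ rate in the transport regime. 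Your phrase ``$\Pi_h$ can be chosen with optimal boundary approximation'' gestures at the fix, but it contradicts your stated choice of the $L^2$-orthogonal projection; the choice of projection is the one non-generic ingredient of this proof and needs to be made explicit. Two minor further points: the paper's $d_h$ is of non-symmetric (NIPG) type, so \eqref{dh:stab} is an exact identity valid for any $\alpha>0$ --- no ``$\alpha$ large enough'' condition is needed; and the hybrid projection must be nodal interpolation so that $\hat\rho=0$, which you do state correctly.
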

\begin{remark}
The above scheme falls into the class of \emph{hybridizable dG methods} introduced in \cite{Cockburn09}.
By formally setting $\eps=0$, we obtain a consistent approximation for the limiting transport problem \eqref{transp:1}--\eqref{transp:4}, which was analysed in~\cite{Egger20}. Hence, the scheme is {\it asymptotic preserving} in the vanishing diffusion limit.
The bound \eqref{eq:prelim_error} holds verbatim for $\eps=0$ and yields order optimal estimates $\|u^\circ-u_h^\circ\|_{L^\infty(0,T;L^2(\E)} \le C h^{k+1}$ for the approximation of the limiting transport problem on uniform meshes. 
On such meshes, however, we do not expect convergence for $\eps \to 0$ due to the blow-up of the derivatives of $u^\eps$, see Lemma~\ref{lem:deriv}, leading to degenerate bounds on the right hand side of \eqref{eq:prelim_error}.
\end{remark}

\section{The main result}
\label{sec:main}

In order to deal with the singularities of the solution $u^\eps$ for $\eps \to 0$
, we consider approximations on layer-adapted grids. This will allow us to establish error estimates that are uniform in the asymptotic parameter $\eps$.

\subsection{Construction of the adaptive grids}

When $\eps < h^{2k}$ we will use a quasi-uniform grid $\Th=\Th^\circ$ with mesh size $h_T \approx h$ for all elements $T \in \Th$. 
For $\eps \ge h^{2k}$ 
we construct a layer-adapted grid $\Th^\eps$ as follows:
For every edge $e \in \E$ we define a transition point 
\begin{align} \label{eq:transition}
    x^\ast_e := \ell_e - \tfrac{k+1}{b_e}\eps \log(1/\eps).
\end{align}
On the intervals $[0,x^\ast_e)$ we consider the mesh-points inherited from the uniform mesh $\Th^\circ$, augmented by the transition point $x^\ast_e =: x_e^{M_e^*}$. The corresponding elements are collected in the set~$\Th^{\eps,1}$. By construction, we have $h_T \lesssim  h$ for $T \in \Th^{\eps,1}$ and $|\Th^{\eps,1}| = M_e^* \le C h^{-1}$. 
In the layer region $(x^\ast_e,\ell_e]$, the mesh points are defined recursively by
\begin{align}\label{eq:gradmesh1}
    h_e^i = \eps h e^{b_e(\ell_e-x_e^i)/\eps(k+1)},\qquad x_e^{i-1} = x_e^i - h_e^i, \qquad i \le M_e,
\end{align}
with $x_e^{M_e}=\ell_e$ denoting the outflow vertex of the edge $e \simeq  (0,\ell_e)$.
The index $M_e$ is chosen such that $x_e^{M_e^*+1}$ is the last point in this sequence which is strictly larger than the transition point $x^\ast_e$; see Figure~\ref{fig:meshes}.
\begin{figure}[ht]
\centering
\begin{tikzpicture}[scale=2,label distance=3mm]
    
    \node (A) at (4.5,0) {$\T_h^\circ$};
    
    \coordinate[label = above: $0$] (B) at (0,0);
    \coordinate (C) at (0.5,0);
    \coordinate (D) at (1.25,0);
    \coordinate (E) at (1.9,0);
    \coordinate (F) at (2.5,0);
    \coordinate[label = above: $\ell_e$] (G) at (4,0);
    \coordinate (H) at (3.25,0);
    
    \draw[|-|,very thick] (B) to (C);
    \draw[-|,very thick] (C) to (D);
    \draw[-|,very thick] (D) to (E);
    \draw[-,very thick] (E) to (F);
    \draw[|-|,very thick] (F) to (H); 
    \draw[-|,very thick] (H) to (G); 

    
    \node (A) at (4.5,-0.5) {$\Th^\eps$};
    
    \coordinate[label = below: $0$] (B) at (0,-0.5);
    \coordinate (C) at (0.5,-0.5);
    \coordinate (D) at (1.25,-0.5);
    \coordinate (E) at (1.9,-0.5);
    \coordinate (F) at (2.5,-0.5);
    \coordinate[label = below: $\ell_e$] (G) at (4,-0.5);
    
    \coordinate[label = below: $x_e^\ast$] (I) at (3,-0.5);
    \coordinate (J) at (3.4,-0.5);
    \coordinate (K) at (3.65,-0.5);
    \coordinate (L) at (3.8,-0.5);
    \coordinate (M) at (3.89,-0.5);
    \coordinate (N) at (3.95,-0.5);
    
    \draw[|-|,very thick] (B) to (C);
    \draw[-|,very thick] (C) to (D);
    \draw[-|,very thick] (D) to (E);
    \draw[-|,very thick] (E) to (F);
    \draw[-,very thick] (F) to (I);
    \draw[|-|,very thick,cyan] (I) to (J);
    \draw[-|,very thick,cyan] (J) to (K);
    \draw[-|,very thick,cyan] (K) to (L);
    \draw[-|,very thick,cyan] (L) to (M);
    \draw[-|,very thick,cyan] (M) to (N);
    \draw[-|,very thick,cyan] (N) to (G);
    
    
\end{tikzpicture}
\caption{Quasi-uniform mesh $\Th^\circ$ for a single pipe (top) and corresponding adaptive mesh $\Th^\eps=\Th^{\eps,1} \cup \color{cyan} \Th^{\eps,2}$ (bottom). The layer region $(x^\ast_e,\ell^e)$ is depicted in cyan.\label{fig:meshes}}
\end{figure}
The elements in the layer region, generated by the points $x_e^i$ with $M_e^* \le i \le M_e$, are collected in the set $\Th^{\eps,2}$. 
Using a slight modification of the arguments in \cite{Gartland88,Roos15} and the condition $\eps \ge h^{2k}$, one can show that $|\Th^{\eps,2}| \le C h^{-1}$ with $C$ independent of $h$ and $\eps$; see Section~\ref{sec:proof_main} below.
The complete layer-adapted mesh $\Th^\eps = \Th^{\eps,1} \cup \Th^{\eps,2}$ is then simply obtained by accumulation.

\subsection{Uniform error estimate}

We can now fully describe our adaptive approximation scheme and state the main result of this paper. 
\begin{theorem}\label{thm:error}
Let Assumption \ref{ass:1} hold, and let $(u^\eps,\hat u^\eps)$ be the unique solution of the convection-diffusion problem \eqref{convdiff:1}--\eqref{convdiff:4} with initial value $u^\eps(0)=0$.
Further define 
\begin{align*}
\tilde u_h^\eps := 
\begin{cases} 
    u_h^\circ, & \text{if } \eps < h^{2k},\\
    u_h^\eps, & \text{if } \eps \ge h^{2k},    
\end{cases}
\end{align*}
where $(u_h^\circ,\hat u_h^\circ)$ is the solution of Problem~\ref{prob:convdiff} with $\eps=0$ on the mesh $\Th=\T_h^\circ$, while $(u_h^\eps,\hat u_h^\eps)$ is the solution of Problem~\ref{prob:convdiff} on the layer-adapted mesh $\Th=\T_h^\eps$. 
Then 
\begin{align}\label{eq:error}
    \|u^\eps-\tilde u_h^\eps\|_{L^\infty(0,t_{max};L^2(\E))} \le C \max(h^{k+1},\min(\sqrt{\eps},h^k)).
\end{align}
Moreover, the number of elements in $\T_h^\eps$ can be bounded by $C' h^{-1}$. 
The constants $C,C'$ in these estimates only depend on the bounds in Assumption~\ref{ass:1}, but not on $\eps$ or $h$.
\end{theorem}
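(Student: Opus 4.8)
The plan is to split the estimate \eqref{eq:error} according to the two regimes in the definition of $\tilde u_h^\eps$, and in each regime combine the preliminary local error estimate from Lemma~\ref{lemma:prelim_error} with the derivative bounds \eqref{eq:deriv} (resp. \eqref{eq:regularity_transport}) and, when $\eps<h^{2k}$, the asymptotic estimate \eqref{eq:asympt_estimate_network}. First, in the case $\eps<h^{2k}$, I would write $\|u^\eps-u_h^\circ\|\le\|u^\eps-u^\circ\|+\|u^\circ-u_h^\circ\|$. The first term is $\le C'\sqrt\eps$ by \eqref{eq:asympt_estimate_network}, and since $\eps<h^{2k}$ this is $\le C' \min(\sqrt\eps,h^k)$, which is dominated by the right-hand side of \eqref{eq:error}. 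The second term is bounded by applying Lemma~\ref{lemma:prelim_error} with $\eps=0$ on the quasi-uniform mesh $\Th^\circ$: the right-hand side collapses to $C\sum_T h_T^{2k+2}\|u^\circ\|_{H^1(H^{k+1}(T))}^2\lesssim h^{2k+2}\|u^\circ\|_{H^1(H^{k+1}(\E))}^2$, using $h_T\approx h$ and the regularity \eqref{eq:regularity_transport}; this gives the $h^{k+1}$ contribution.

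The substantive case is $\eps\ge h^{2k}$, where I use Lemma~\ref{lemma:prelim_error} on the layer-adapted mesh $\Th^\eps=\Th^{\eps,1}\cup\Th^{\eps,2}$ and must show
\begin{align*}
\sum_{T\in\Th^\eps}(\eps h_T^{2k}+h_T^{2k+2})\,\|u^\eps\|_{H^1(0,t_{max};H^{k+1}(T))}^2 \;\le\; C\,\min(\eps,h^{2k})\;=\;C\,h^{2k},
\end{align*}
the last equality holding since $\eps\ge h^{2k}$; note $\min(\sqrt\eps,h^k)=h^k$ in this regime, so the bound $h^{2k}$ on the squared error is exactly what \eqref{eq:error} asks for. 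The derivative bound \eqref{eq:deriv} gives, for $T=T_e^i$, $\|u^\eps\|_{H^{k+1}(T)}^2\lesssim h_T\,(1+\eps^{-2(k+1)}e^{-2b_e(\ell_e-x)/\eps})$ evaluated suitably on $T$, and analogously in $t$. On the coarse part $\Th^{\eps,1}$, where $\ell_e-x\ge \ell_e-x_e^* = \tfrac{k+1}{b_e}\eps\log(1/\eps)$, the exponential factor is $\le e^{-2(k+1)\log(1/\eps)}=\eps^{2(k+1)}$, which exactly cancels the $\eps^{-2(k+1)}$ blow-up; hence the integrand is $O(1)$ there and, since $h_T\lesssim h$ and $|\Th^{\eps,1}|\lesssim h^{-1}$, this part contributes $\lesssim h^{2k}\cdot h^{-1}\cdot h = h^{2k}$. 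This is precisely why the transition point was chosen as in \eqref{eq:transition}: it trades the $\eps^{-(k+1)}$ factor against $e^{-b_e(\ell_e-x^*_e)/\eps}$.

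On the graded part $\Th^{\eps,2}$, I would insert the mesh law $h_e^i = \eps h\, e^{b_e(\ell_e-x_e^i)/\eps(k+1)}$ into the derivative bound. The worst term is $\eps h_T^{2k}\cdot h_T\cdot \eps^{-2(k+1)}e^{-2b_e(\ell_e-x)/\eps}$; writing $h_T^{2k+1}=(\eps h)^{2k+1}e^{(2k+1)b_e(\ell_e-x)/\eps(k+1)}$ and combining exponentials, the factor $e^{\frac{2k+1}{k+1}b_e(\ell_e-x)/\eps}e^{-2b_e(\ell_e-x)/\eps}=e^{-\frac{1}{k+1}b_e(\ell_e-x)/\eps}$ decays, so summing the resulting geometric-type series over $i$ yields a bound $\lesssim \eps^{-2(k+1)}\cdot(\eps h)^{2k+1}\cdot\eps = h^{2k+1}\cdot(\text{geometric sum}\lesssim \eps/h)\lesssim h^{2k}$; the lower-order terms ($h_T^{2k+2}$ and the $1$ inside the parenthesis) are handled the same way and are no worse. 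One also needs the elementwise bound $h_T\lesssim 1$ to use inverse/regularity estimates safely; this follows from $h_e^i\le \eps h\,e^{b_e \log(1/\eps)/(k+1)}\cdot(\text{const})\lesssim \eps^{1-1/(k+1)}h\lesssim 1$. The element count $|\Th^{\eps,2}|\le C h^{-1}$ is obtained by the standard Gartland-type argument (\cite{Gartland88,Roos15}): the recursion for $x_e^i$ is essentially geometric with ratio bounded away from $1$, and the number of steps needed to traverse the layer $(x^*_e,\ell_e]$ of width $\tfrac{k+1}{b_e}\eps\log(1/\eps)$ is $O(\log(1/\eps))$, which under $\eps\ge h^{2k}$ is $O(\log(1/h))$; a sharper bookkeeping, as indicated after \eqref{eq:gradmesh1} in the excerpt, actually gives the claimed $O(h^{-1})$ rather than only $O(h^{-1}\log)$. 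Collecting the two parts of $\Th^\eps$ proves \eqref{eq:error} and the element-count bound; the network structure enters only through summation over $e\in\E$, which is harmless since all constants in \eqref{eq:deriv} and in the mesh construction are uniform over edges by Assumption~\ref{ass:1}. The main obstacle is the careful summation of the weighted series on $\Th^{\eps,2}$ — ensuring that the exponential gain from grading strictly beats the algebraic loss $\eps^{-2(k+1)}$ for every admissible $k\ge1$ — together with verifying that the same transition point simultaneously controls the coarse region and keeps the total element number optimal.
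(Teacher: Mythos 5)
Your proposal follows the same two-case strategy as the paper's proof: for $\eps<h^{2k}$, the triangle inequality combined with the asymptotic estimate \eqref{eq:asympt_estimate_network} and the $O(h^{k+1})$ transport error bound; for $\eps\ge h^{2k}$, the localized estimate of Lemma~\ref{lemma:prelim_error} together with the derivative bounds \eqref{eq:deriv}, the exact cancellation of $\eps^{-(k+1)}$ at the transition point on $\Th^{\eps,1}$, and insertion of the mesh law \eqref{eq:gradmesh1} on $\Th^{\eps,2}$. Your discrete geometric summation on the layer part is equivalent to the paper's conversion of the sum into the integral $\int_{x_e^*}^{\ell_e}e^{-2b_e(\ell_e-x)/\eps(k+1)}\,dx$; note only that the geometric sum is $\lesssim 1/h$ rather than $\eps/h$, which still gives $h^{2k+1}\cdot h^{-1}=h^{2k}$, so the conclusion is unaffected. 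The one place your reasoning is genuinely off is the element count: traversing the layer does \emph{not} take $O(\log(1/\eps))$ steps, since the innermost strip of width $\sim\eps$ alone already requires $\sim h^{-1}$ elements of size $\sim\eps h$, and your next sentence (claiming the sharper count is $O(h^{-1})$ ``rather than only $O(h^{-1}\log)$'') contradicts the $O(\log(1/h))$ you just asserted. The correct accounting, as in the paper, is that the Gartland-type argument bounds the number of elements meeting $(x_e^S,\ell_e)$ with $x_e^S=\ell_e-\tfrac{k+1}{b_e}\eps\log(1/h)$ by $c'h^{-1}$; the condition $\eps\ge h^{2k}$ gives $|\ell_e-x_e^*|\le 2k\,|\ell_e-x_e^S|$, and since the mesh coarsens away from the layer one obtains $|\Th^{\eps,2}|\le 2kc'h^{-1}$, with only the outer portion $(x_e^*,x_e^S)$ contributing the $O(\log(1/h))$ elements you had in mind.
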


\begin{remark}
Since the number of elements $N=|\Th^\eps| \approx C' h^{-1}$, we immediately obtain corresponding bounds 
\begin{align*}
     \|u^\eps-\tilde u_h^\eps\|_{L^\infty(0,t_{max};L^2(\E))} \le C'' \max(N^{-k-1},\min(\sqrt{\eps},N^{-k})).
\end{align*}
A quick look into the error analysis reveals, that one can derive corresponding bounds for the error $\|u^\eps - \tilde u_h^\eps\|_{\eps,h}$ in the mesh- and parameter dependent dG-norm; see \cite{Ern11,Roos15,Thomee07}.
By formally setting $\eps=0$, we obtain the error estimate for the hybrid-dG approximation of the pure transport problem, which was established in~\cite{Egger20}.
\end{remark}
\subsection{Proof of Theorem~\ref{thm:error}}\label{sec:proof_main}

The two cases which are exploited in the construction of the adaptive approximation scheme are treated separately in the following. Note that all constants appearing here and in subsequent proofs only depend on the bounds in Assumption \ref{ass:1}, but not on $\eps$ or $h$. 

\subsection*{Case~1: $\eps < h^{2k}$}
By construction, we have $\tilde u_h^\eps = u_h^\circ$, where $u_h^\circ$ denotes the hybrid-dG approximation of the transport problem. 
Using the triangle inequality, we obtain 
\begin{align*}
\|u^\eps - \tilde u_h^\eps\|_{L^\infty(0,t_{max};L^2(\E))} 
&= \|u^\eps - u_h^\circ\|_{L^\infty(0,t_{max};L^2(\E))} \\
&\le \|u^\eps - u^\circ\|_{L^\infty(0,t_{max};L^2(\E))} + \|u^\circ - u_h^\circ\|_{L^\infty(0,t_{max};L^2(\E))} \\
&\le c \sqrt{\eps} + c' h^{k+1} \le C\max(\sqrt{\eps},h^{k+1}),
\end{align*}
with $(u^\circ,\hat u^\circ)$ being the solution of \eqref{transp:1}--\eqref{transp:4}.
Here, we employed the asymptotic estimate \eqref{eq:asympt_estimate_network} for the continuous solution as well as the error estimate for the hybrid-dG approximation of the limiting transport problem; see \cite{Egger20}. This already yields the bound \eqref{eq:error} for the case $\eps < h^{2k}$.
The assertion about the number of elements is clear, since the mesh $\Th=\Th^\circ$ is quasi-uniform.

\subsection*{Case 2: $\eps\ge h^{2k}$}
%
%
From Lemma~\ref{lemma:prelim_error}, we already know that
\begin{align}\label{eq:prelim_error2}
\|u^\eps(t)-u_h^\eps(t)\|_{L^2(\E)}^2\le c\sum\nolimits_{T\in\T_h}(\eps h_T^{2k} + h_T^{2k+2}) \|u^\e\|_{H^1(0,t_{max};H^{k+1}(T))}^2.
\end{align}
Using the bounds \eqref{eq:deriv}, the choice of the transition point $x^\ast_e$, and noting that $h_T \approx h$ is uniform for elements $T \in \T_h^{\eps,1}$, we find that
\begin{align*}
\sum\nolimits_{T \in \T_h^1} (\eps h_T^{2k} + h_T^{2k+2}) \|u^\eps\|^2_{H^1(0,t_{max};H^{k+1}(T))} 
&\le c' (\eps h^{2k} + h^{2k+2}) \le 2c' h^{2k}. 
\end{align*}
In the layer-adapted part $\T_h^{\eps,2}$ of the mesh, 
the local mesh sizes $h_e^i$ are determined recursively by \eqref{eq:gradmesh1}, 
and the bounds \eqref{eq:deriv} allow us to estimate
\begin{align*}
\sum\nolimits&_{T_e^i\in\T_h^{\eps,2}} \left(\eps (h_e^i)^{2k} + (h_e^i)^{2k+2}\right) \|u^\eps\|_{H^1(0,t_{max};H^{k+1}(T_e^i))}^2\\
&\qquad\qquad
\le c' \sum\nolimits_{T_e^i\in\T_h^2} \left(\eps (h_e^i)^{2k} + (h_e^i)^{2k+2}\right) \int_{x_e^{i-1}}^{x_e^i} \eps^{-2k-2} e^{-2 b_e (\ell_e - x)/\eps}\ dx =: (i) + (ii).
\end{align*}
Using the choice $h_\eps^i = \eps h e^{b_e (\ell_e - x_e^i)/\eps (k+1)}$ in \eqref{eq:gradmesh1}, the first term can be estimated by
\begin{align*}
    (i)
    &
    \le c'\sum\nolimits_{T_e^i\in\T_h^{\eps,2}}\eps^{-1} h^{2k}  \int_{x_e^{i-1}}^{x_e^i}e^{2kb_e(\ell_e-x)/(k+1)} e^{-2 b_e (\ell_e - x)/\eps}\ dx\\
    & 
    =  c' \sum\nolimits_{e\in\E} \eps^{-1} h^{2k} \int_{x_e^\ast}^{\ell_e} e^{-2 b_e (\ell_e - x)/\eps(k+1)}\ dx
    \le c'' h^{2k},
\end{align*}
since the pipe network is finite. 
Similarly, we obtain for the second term
\begin{align*}
    (ii)
    \le c' h^{2k+2} \sum\nolimits_{T_e^i\in\T_h^{\eps,2}} h_e^i 
    \le c' h^{2k+2}\sum\nolimits_{e\in\E}(\ell_e - x_e^\ast) 
    \le c''h^{2k+2}.
\end{align*}
Adding the two estimates for $\T_h^{\eps,1}$ and $\T_h^{\eps,2}$ yields the bound \eqref{eq:error} for $\eps \ge h^{2k}$.

It remains to verify the bound on the number of elements:
The graded mesh $\Th^{\eps,2}$ is of Gartland-type, 
and using the arguments employed in
\cite[p.645]{Gartland88} and \cite[p.8]{Roos15}, one can see that the number of elements in $\Th^{\eps,2}$ intersecting  the intervals $(x^S_e,\ell_e)$, with $x^S_e=\ell_e - \frac{k+1}{b_e} \eps \log(1/h)$ denoting the Shishkin transition point, is bounded by $c' h^{-1}$. 
By assumption, we further have $\eps \ge h^{2k}$, and therefore $|\ell_e -x^\ast_e| \le 2k|\ell - x_e^S|$. Since the mesh gets coarser when moving away from the layer, we conclude that $|\Th^{\eps}| \le C' h^{-1}$ with $C' \le 2k c'$.
%
By a refined analysis, this estimate could even be further improved, i.e., the number of elements intersecting the region $(x^\ast_e,x^S_e)$ is bounded by $c'' \log(1/h)$, which would yield $C'\le c'+c''\log(1/h)h^{-1}$. 
This completes the proof.
\qed

\section{Numerical illustration} \label{sec:num}

For illustration of the flexibility and performance of the proposed hybrid-dG scheme, we now present some numerical results. We first discuss in detail the convergence behaviour for a problem on a single pipe, and then briefly discuss a test problem on a pipe network. 

\subsection{Single pipe}

We consider a single pipe $e=(v_1,v_2) \simeq (0,1)$ of length $\ell = 1$. 
The flow velocity is chosen as $b=1$, and the boundary conditions are described by
\begin{align*}
    \hat g_{v_1}(t) = \tfrac{1}{t_{max}} t^3,\qquad \hat g_{v_2} = 0. 
\end{align*}
The simulations are performed for time $t \le t_{max}=3$, and the initial conditions are given by $u^\eps(0)=0$. 
This choice of the problem data satisfies Assumption~\ref{ass:1} with $m=2$, which suffices to guarantee optimal convergence rates for polynomial order $k \le 2$.

\textbf{Discretization and error estimation.}
For our numerical tests, we utilize the proposed hybrid-dG method with piecewise quadratic finite elements, and we set $\alpha=1$ for the stabilization parameter. 
The convergence rates of Theorem~\ref{thm:error} apply with $k=2$. 
%
%
For the time integration, we use the Radau~IIA Runge-Kutta method with $3$ stages and an uniform time step $\tau$. This scheme can be interpreted as a discontinuous Galerkin method with second order polynomials, and the time-discretization errors can be shown to be of order $O(\tau^{3})$; see~\cite{Akrivis11,Thomee07}. We choose $\tau = h/2$, such that the effect of the time discretization can be considered negligible. 
%
To estimate the discretization errors, we compute a reference solution $u_{ref}^\eps$ on a mesh $\Th^{ref}$, which is obtained by two uniform refinements of the computational mesh $\Th$ used in our analysis.
In addition, the time step $\tau^{ref}=\tau/4$ is reduced accordingly.
The actual discretization error is then estimated by
\begin{align}
   \|u^\eps - \tilde u_h^\eps\|_{ref} := \max\nolimits_{n=0,..,N_{ref}}\| (u^\eps_{ref})(t^n) - (I_{ref} \tilde u_h^\eps)(t^n)\|_{
    L^2(\E)},
\end{align}
where $I_{ref}$ denotes the interpolation operator onto the reference mesh and $N_{ref}$ the number of time steps used for the computation of the reference solution. 

\begin{figure}[ht!]
    \centering
    \includegraphics[scale=0.55]{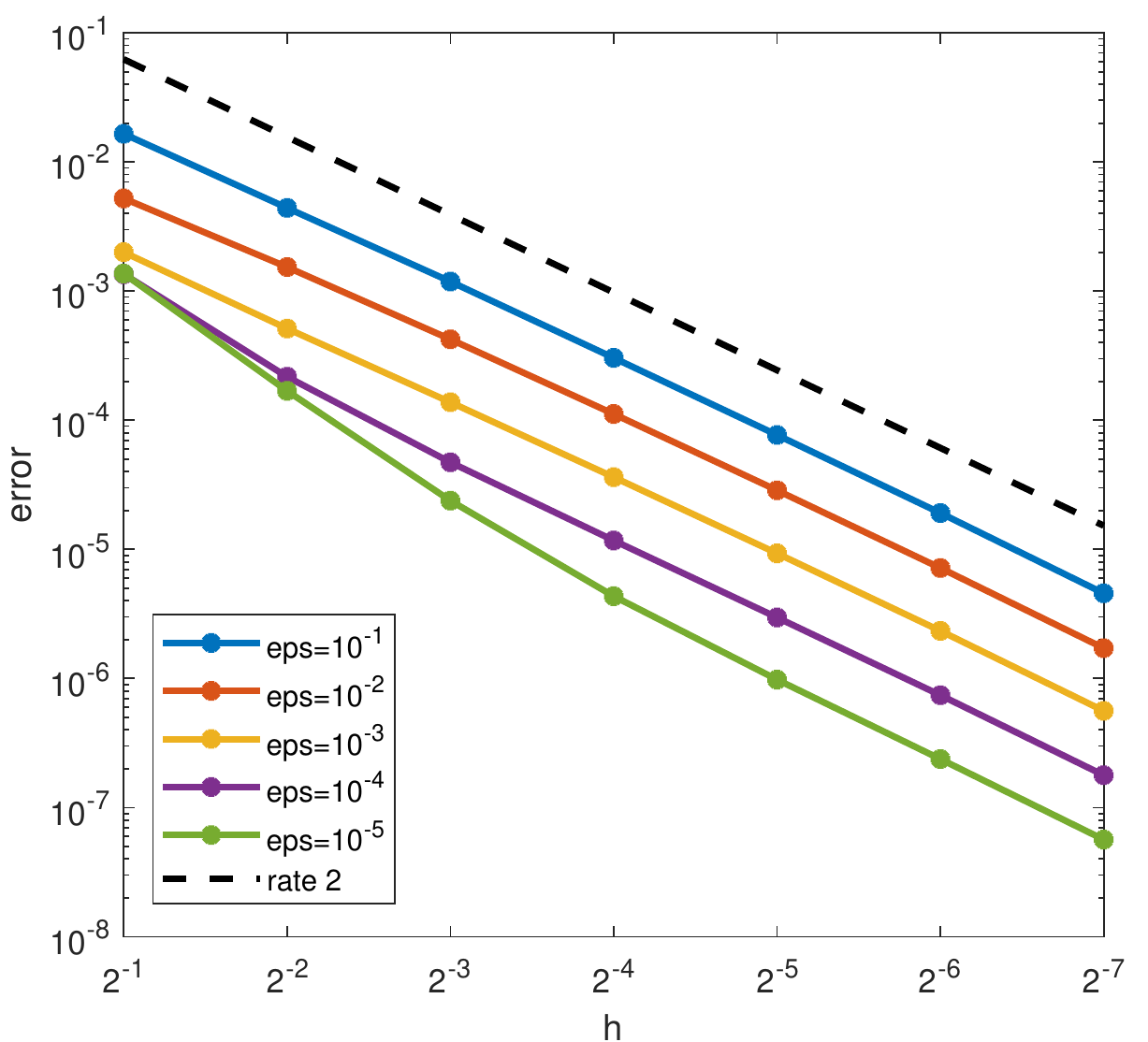}
    \hspace*{1em}
    \includegraphics[scale=0.55]{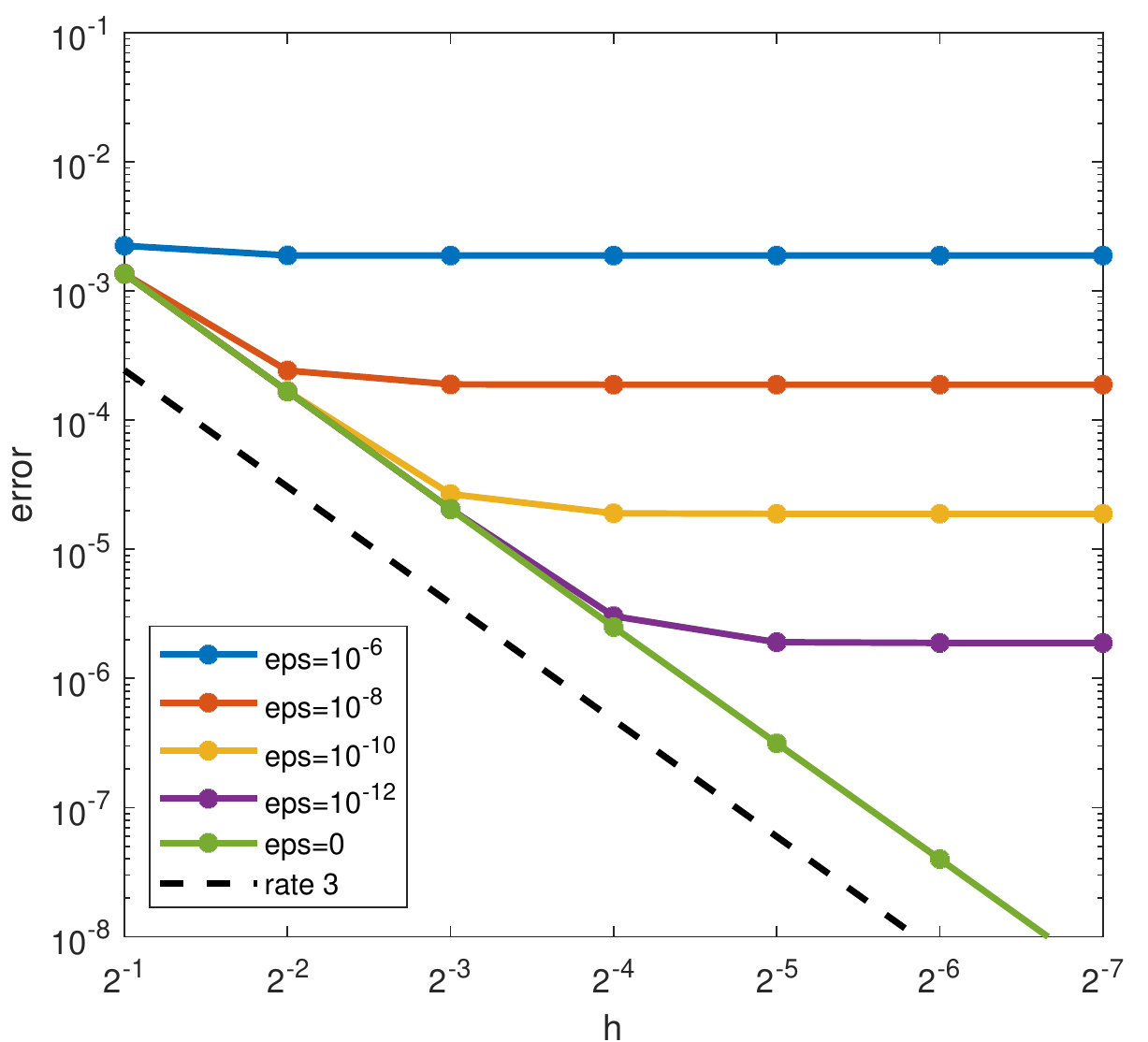}
    \caption{(Left) Error for $\tilde u_h^\eps = u_h^\eps$ on graded mesh $\T_h^\eps$. (Right) Error for $\tilde u_h^\eps = u_h^\circ$ on $\T_h^\circ$.}
    \label{fig:error}
\end{figure}

\textbf{Results.}
In the left plot of Figure~\ref{fig:error}, we display the numerical errors $\|u^\eps - \tilde u_h^\eps\|_{ref}$ obtained by the hybrid-dG method on the graded mesh $\T_h^\eps$ for different values of $\eps>0$. This is the relevant error in the diffusive regime $\eps \ge h^{2k}$.
In accordance with Theorem~\ref{thm:error}, we observe second order convergence. 
On coarse meshes and for small $\eps$, the diffusive terms can be considered as a perturbation of the pure transport problem, which explains the increase in the convergence rate for the test with $\eps=10^{-5}$.
In our tests, the number of elements $|\Th^{\eps,2}|$ in the boundary layer $(x^\ast,\ell)$ is approximately $3\cdot h^{-1}$, and only few elements lie in the layer $(x^\ast,x^S)$ outside the Shishkin point; see Section~\ref{sec:proof_main}.
%
In the right part of Figure~\ref{fig:error}, we plot the errors $\|u^\eps - u_h^\circ\|_{ref}$, which are relevant when $\eps < h^{2k}$. 
From the proof of Theorem~\ref{thm:error}, one can see that $\|u^\eps - u_h^\circ\|_{ref} \le C \, \max(h^{k+1},\sqrt{\eps})$. This leads to a saturation when $\eps>h^{2k+2}$, i.e., $\sqrt{\eps}$ becomes the dominating term in the error estimate, which is the behaviour observed in the tests.
%
%
In summary, the numerical results are in perfect agreement with the theoretical predictions.

\subsection{A pipe network}

As a second test problem, we consider a pipe network consisting of $11$ edges and $11$ vertices, with $3$ entries, $2$ exits and $1$ loop; see Figure \ref{fig:gaslib11} for a sketch. 
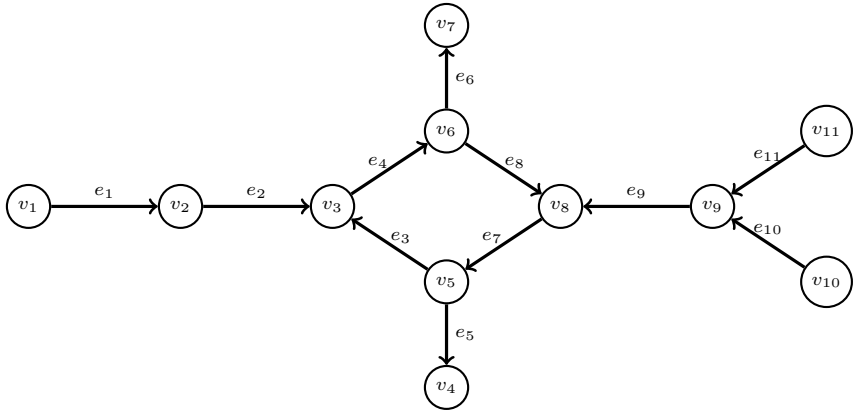
\begin{figure}[ht!]
\centering
\begin{tiny}
\begin{tikzpicture}[scale=2]
\node (A) at (0,0) [circle,draw,thick] {$v_1$};
\node (B) at (1,0) [circle,draw,thick] {$v_2$};
\node (D) at (2,0) [circle,draw,thick] {$v_3$};
\node (G) at (2.75,-0.5) [circle,draw,thick] {$v_5$};
\node (E) at (2.75,0.5) [circle,draw,thick] {$v_6$};
\node (F) at (2.75,1.2) [circle,draw,thick] {$v_7$};
\node (H) at (2.75,-1.2) [circle,draw,thick] {$v_4$};
\node (I) at (3.5,0) [circle,draw,thick] {$v_8$};
\node (J) at (4.5,0) [circle,draw,thick] {$v_9$};
\node (K) at (5.25,0.5) [circle,draw,thick] {$v_{11}$};
\node (L) at (5.25,-0.5) [circle,draw,thick] {$v_{10}$};
\draw[->, very thick] (A) to node[above] {$e_1$} (B);
\draw[->, very thick] (D) to node[above left=-0.1cm] {$e_4$} (E);
\draw[->, very thick] (G) to node[right] {$e_5$} (H);
\draw[->, very thick] (E) to node[right] {$e_6$} (F);
\draw[->, very thick] (E) to node[above right=-0.1cm] {$e_8$} (I);
\draw[->, very thick] (I) to node[above left=-0.1cm] {$e_7$} (G);
\draw[->, very thick] (K) to node[above] {$e_{11}$} (J);
\draw[->, very thick] (L) to node[above] {$e_{10}$} (J);
\draw[->, very thick] (B) to node[above] {$e_2$} (D);
\draw[->, very thick] (J) to node[above] {$e_9$} (I);
\draw[->, very thick] (G) to node[above right=-0.1cm] {$e_3$} (D);
\end{tikzpicture}
\end{tiny}
\caption{Topology of the GasLib-11 network, taken from \cite{gaslib}.}
\label{fig:gaslib11}
\end{figure}
For ease of presentation, we set $\ell_e=1$ for the length of all edges. 
%
The volume flow rates are given by
\begin{align*}
    b_{e_1} = b_{e_2} = b_{e_5} = b_{e_6} = b_{e_9} = 2,\quad
    b_{e_3} = b_{e_8} = b_{e_{10}} = b_{e_{11}} = 1,\quad
    b_{e_4} = b_{e_7} = 3.
\end{align*}
By this choice, condition \eqref{eq:b} is satisfied.
As boundary conditions we choose
\begin{align*}
    \hat g_{v_1}(t) = \frac{2}{t_{max}^3} t^3,\quad \hat g_{v_4}(t) = \hat g_{v_7}(t) = 0,\quad \hat g_{v_{10}}(t) = \frac{3}{2 t_{max}^4} t^4,\quad \hat g_{v_{11}}(t) = \frac{5}{2 t_{max}^3} t^3.
\end{align*}
The time horizon is set to $t_{max}=6$, and the initial conditions are again $u^\eps(0)=u^\circ(0)=0$.
Like in the previous test, Assumption~\ref{ass:1} is satisfied with smoothness parameter $m=2$.
%

\textbf{Results.}
At the vertices $v_3$, $v_8$, and $v_9$, which have two in-going and one out-going pipe, we expect discontinuities in the concentration field in the transport limit $\eps=0$, and corresponding internal layers for the convection--diffusion problem with $\eps>0$. Furthermore, we expect boundary layers at the outflow vertices $v_4$ and $v_7$; compare with the plot in Figure~\ref{fig:network}.  
\begin{figure}
\centering
\includegraphics[scale=0.65]{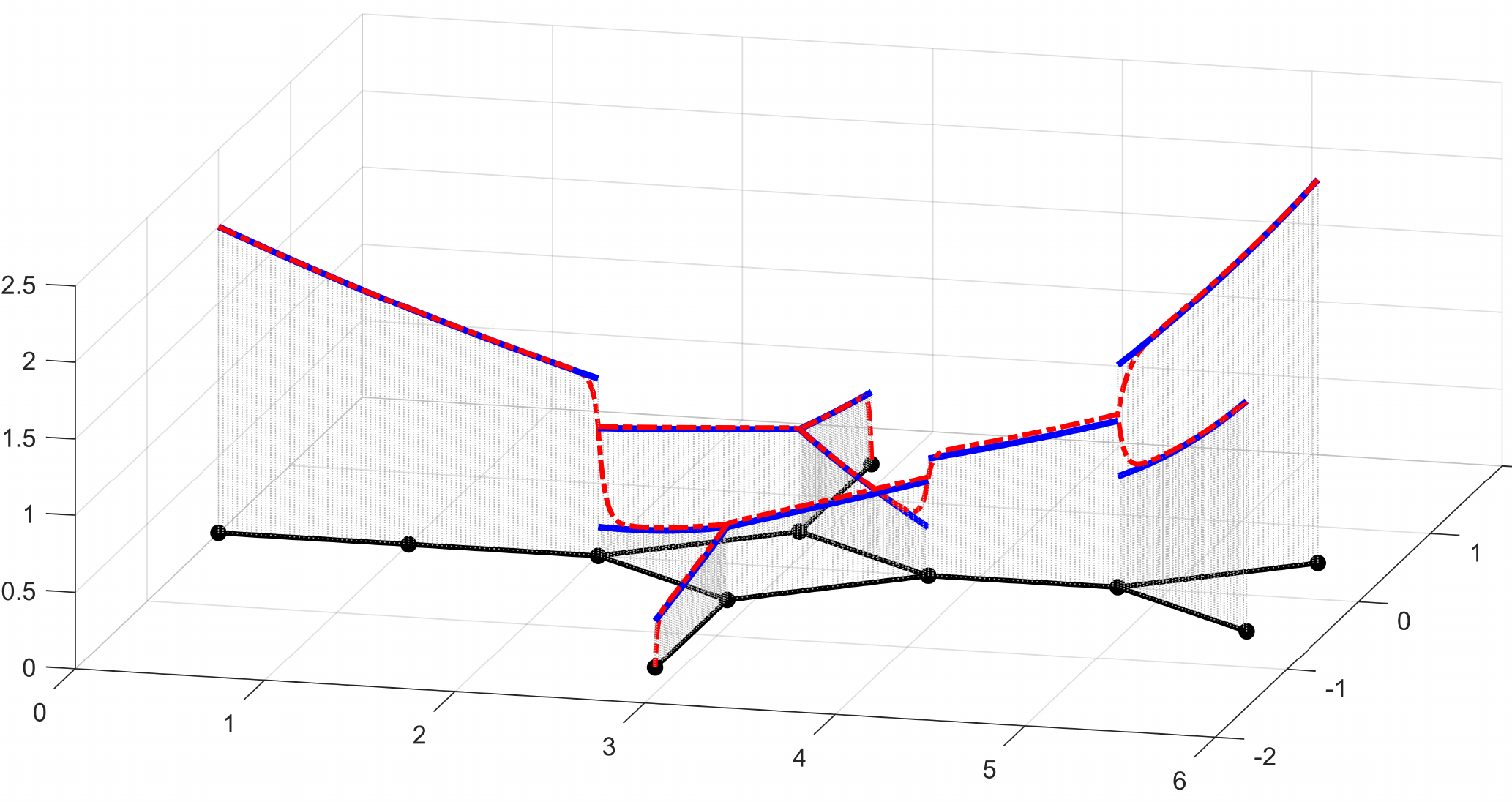}
\caption{Solution to the convection-diffusion problem for $\eps=0.05$ (red,dashed) and the limiting transport problem for $\eps=0$ (blue, solid). 
Discontinuities appearing for the transport limit are smoothed out for $\eps>0$ by diffusion, leading to boundary layers at the outflow boundaries and internal junctions.
Away from the layers, the solutions of the two different problems can hardly be distinguished.}
\label{fig:network}
\end{figure}
Using the same discretization strategy and test setup as explained for the case of a single pipe, we repeated the convergence tests and observed exactly the same convergence behavior as depicted in Figure \ref{fig:error} for the case of a single pipe. Since no additional insight is obtained from these results, we omit their presentation here.


\section{Discussion} \label{sec:disc}

In this paper, we studied the numerical approximation of singularly perturbed parabolic convection-diffusion problems on one-dimensional pipe networks by a hybrid discontinuous Galerkin method. 
A key feature of this method is the automatic handling of the coupling conditions at network junctions and boundary conditions at network boundaries, whose number and type changes in the vanishing diffusion limit $\eps \to 0$. 
Together with an upwind treatment of the convective terms, the proposed scheme is asymptotic preserving. 
To avoid possible instabilities resulting from boundary and internal layers, geometrically adapted meshes of Gartland-type were employed in the layer regions.

In the transport regime, i.e., when $\eps < h^{2k}$, we proposed to use the numerical approximation of the pure transport problem. This allowed us to choose the transition point for the layer adapted mesh as $x^*(\eps) \approx \ell - \eps \log(1/\eps)$, while still guaranteeing quasi-optimal number $N \approx h^{-1}$ of elements. Moreover, this choice allowed us to resort on standard localized discretization error estimates for dG methods. 
The numerical results demonstrate the validity and sharpness of our estimates.

For lowest order approximations with $k=1$, we observed convergence of order $O(h^{k+1})$ even in the diffusion dominated regime. 
While the use of hybrid variables in our discretization method is particularly useful for the automatic handling of the coupling conditions at network junctions, alternative discretization strategies, e.g., standard dG schemes, upwind finite differences, or SUPG-Galerkin methods, may be used for the approximation along the pipes. 
The main steps of our analysis should carry over to such schemes almost verbatim. 
Also the additional consideration of time discretization seems possible without major complications.
A theoretical investigation of these topics is left for future research.

\section*{Acknowledgements}
The authors are grateful for financial support by the German Research Foundation (DFG) via grant TRR~154, subproject~C04, project-number 239904186.



\newpage
\appendix

\section*{Appendix}

For completeness of the presentation, we now give the proofs for some auxiliary results, which were used in our error analysis and follow by standard arguments.

\section{Proof of the bounds \eqref{eq:deriv} in Lemma~\ref{lem:deriv}}
\label{sec:thm:deriv}

Let $u^\eps$ be the solution of \eqref{convdiff:1}--\eqref{convdiff:4} with initial value $u^\eps(0)=0$. We want to show that
\begin{align}\label{eq:derivatives}
|\dt^n\dx^j u^\eps_e(x,t)| \leq C\, (1+\e^{-j} e^{-b_e(\ell_e-x)/\eps})
\end{align}
for all
$n\le m,\, j\le 2(m-n)+1$; recall that $m$ is the regularity index of Assumption~\ref{ass:1}.
For establishing these bounds, we will use the following weak maximum principle for convection-diffusion problems on networks; see \cite[Lemma 7]{EggerCD20}.

\begin{lemma} \label{lemma:max}
Let $u\in C^1([0,t_{max}];L^2(\E))\cap C^0([0,t_{max}];H^1(\E) \cap H^2_{pw}(\E))$ satisfy 
\begin{alignat*}{4}
\dt u_e+b_e \dx u_e - \eps \dxx u_e &\ge 0, \qquad && \forall e\in\E,\\
\sum\nolimits_{e\in\E(v)} \eps \dx u_e(v) n_e(v) &=0,  \qquad && \forall v\in\V_0,\\
u(v) &\ge 0, \qquad  && \forall v\in\V_\partial,
\intertext{for all $0 < t < t_{max}$ with initial conditions}
u(0) &\ge 0, \qquad && \forall e\in\E.
\end{alignat*}
Then, the function $u$ is non-negative, i.e., $u \ge 0$ on $\E$ for all $t\in[0,t_{max}]$. 
\end{lemma}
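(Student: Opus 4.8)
The plan is to argue by an energy estimate: test the differential inequality against the negative part of $u$ and use the two coupling conditions together with the flow balance \eqref{eq:b} to make all vertex contributions cancel. Write $u_- := \min(u,0)\le 0$ and $w := \max(-u,0) = -u_-\ge 0$. By the chain rule for Sobolev functions, $w(t)\in H^1(\E)$ for each $t$, with $\dx w_e = -\dx u_e$ a.e.\ on $\{u_e<0\}$ and $\dx w_e=0$ a.e.\ on $\{u_e\ge 0\}$; moreover $w$ is continuous across interior vertices (as $u$ is), $w(v,t)=0$ for $v\in\dV$ since $u(v,t)\ge 0$ there, and $\|u_-(\cdot,0)\|_{L^2(\E)}=0$ since $u(\cdot,0)\ge 0$. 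Finally, $t\mapsto\|u_-(t)\|_{L^2(\E)}^2$ is absolutely continuous with $\ddt\|u_-(t)\|_{L^2(\E)}^2 = 2(\dt u(t),u_-(t))_{L^2(\E)}$; all of this is standard.

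First I would multiply $\dt u_e + b_e\dx u_e - \eps\dxx u_e\ge 0$ by $w_e\ge 0$, integrate over each edge, and sum over $\E$. The time term contributes $(\dt u,w)_{L^2(\E)} = -\tfrac12\ddt\|u_-\|_{L^2(\E)}^2$. For the convective term I would write $b_e\dx u_e\,w_e = -\tfrac12 b_e\dx\big((u_{e,-})^2\big)$ and integrate by parts on each edge; regrouping the resulting endpoint values by vertices via the identity $\sum_{e\in\E}[\psi_e\chi_e]_0^{\ell_e} = \sum_{v\in\V}\psi(v)\sum_{e\in\E(v)}\chi_e(v)\,n_e(v)$, valid whenever $\psi$ is single-valued at every vertex, the interior-vertex sums vanish by \eqref{eq:b} and the boundary-vertex sums vanish because $u_-(v)=0$, so the convective term is zero. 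For the diffusive term, integration by parts on each edge gives $-\eps(\dxx u,w)_{L^2(\E)} = -\eps\sum_{v\in\V}w(v)\sum_{e\in\E(v)}\dx u_e(v)\,n_e(v) + \eps(\dx u,\dx w)_{L^2(\E)}$; the junction sum vanishes by the Kirchhoff hypothesis at $v\in\V_0$ and by $w(v)=0$ at $v\in\dV$, while $(\dx u,\dx w)_{L^2(\E)} = -\|\dx u_-\|_{L^2(\E)}^2$.

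Adding the three contributions gives $\ddt\|u_-(t)\|_{L^2(\E)}^2 \le -2\eps\|\dx u_-(t)\|_{L^2(\E)}^2 \le 0$ for $0<t<t_{max}$. Since $t\mapsto\|u_-(t)\|_{L^2(\E)}^2$ is continuous on $[0,t_{max}]$ and vanishes at $t=0$, it is identically zero, i.e.\ $u\ge 0$ on $\E$ for all $t\in[0,t_{max}]$, which is the claim. The only genuinely network-specific step — and the one requiring the most care — is the bookkeeping at the junctions: it is precisely there that both coupling conditions (continuity of $u$ and the diffusive Kirchhoff law) and the flow balance \eqref{eq:b} enter, and one must track the signs $n_e(v)$ correctly. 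A more classical alternative is to first pass to a strict inequality via $v:=u+\delta(1+t)$ and argue that a putative non-positive minimum of $v$ cannot occur at $t=0$ or at a boundary vertex, nor in the interior of an edge (the sign conditions on $\dx v$, $\dxx v$, $\dt v$ contradict the strict inequality), nor at an interior vertex (there the Kirchhoff condition forces $\dx v_e=0$ on every incident edge, and a one-sided Taylor expansion then forces $\dxx v_e\ge 0$, again contradicting the strict inequality); letting $\delta\to 0$ concludes. In that route the interior-vertex case is the delicate one.
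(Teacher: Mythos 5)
Your energy argument is correct and complete: testing with the negative part $u_-$, the continuity of $u$ across junctions makes $(u_-)^2$ and $w$ single-valued at vertices, the flow balance \eqref{eq:b} kills the convective vertex terms, the Kirchhoff condition kills the diffusive ones, and Gronwall/monotonicity finishes the proof. The paper itself gives no proof of this lemma but defers to \cite[Lemma 7]{EggerCD20}, where essentially this same negative-part energy argument is used, so your route coincides with the intended one; the only point worth flagging is that \eqref{eq:b} from Assumption~\ref{ass:1} is genuinely needed for the convective vertex terms even though it is not listed among the hypotheses of the lemma as stated.
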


The above estimates can now be established by induction over $j$ and $n$.
%
For $j,n=0$ the claim follows directly from Lemma~\ref{lemma:max} with the usual comparison arguments: 
We define $w_e(x,t):=\max_t|\hat g(t)| \pm u^\eps_e(x,t)$, which satisfies all conditions of Lemma \ref{lemma:max}, and therefore is non-negative.
This implies that $|u^\eps|$ can be bounded by the maximum norm of the boundary data $\hat g$. 
By linearity and time-invariance of the equations, $(\dt^n u^\eps, \dt^n \hat u^\eps)$ again solves \eqref{convdiff:1}--\eqref{convdiff:4}, but with boundary data $\dt^n \hat g$; further note that $\dt^n u^\eps(0)=0$ for $n \le m$. With the same reasoning as above we thus obtain the bounds for $|\dt^n u^\eps|$, $n\le m$.
%

Induction over $j$:
Assume that \eqref{eq:derivatives} holds for all $0 \le i\le j-1$ and all $n \le m$. 
%
In a first step, we verify that $\dt^{n}\dx^{j} u^\eps_e(\ell_e)\leq c\,\e^{-j}$ for all $e\in\E$. By the mean value theorem, we know that there exists $y\in (\ell_e-\eps,\ell_e)$, such that 
\begin{align*}
\dt^{n}\dx^{j} u^\eps_e(y) = \tfrac{1}{\eps} \big(\dt^{n}\dx^{j-1} u^\eps_e(\ell_e)-\dt^{n}\dx^{j-1} u^\eps_e(\ell_e-\eps)\big) \leq c\, \eps^{-j},
\end{align*}
where we used the induction hypothesis in the last step.
Using \eqref{convdiff:1}, we further see that
\begin{align}\label{eq:deriv1}
    \dt (\dt^n\dx^j u^\eps_e) = -b_e \dx (\dt^n\dx^j u^\eps_e) + \eps \dxx (\dt^n\dx^j u^\eps_e).
\end{align}
By the fundamental theorem of calculus and the induction hypothesis, 
we conclude that
\begin{align}\label{eq:deriv2}
\dt^{n}\dx^{j} u^\eps_e(\ell_e) =&\ \dt^{n}\dx^{j} u^\eps_e(y) 
+ \int_y^{\ell_e} \dxx \dt^{n}\dx^{j-1} u^\eps_e(x)\ dx \\
=&\ \dt^{n}\dx^{j} u^\eps_e(y) 
+\!\! \int_y^{\ell_e}\!\!\! \tfrac{1}{\eps}\big(\dt^{n+1}\dx^{j-1} u^\eps_e(x)+b_e \dt^{n}\dx^{j} u^\eps_e(x)\big)\, dx\nonumber\\ 
\le&\ c\,\eps^{-j} 
+\max\nolimits_{x\in[y,\ell_e]}|\dt^{n+1}\dx^{j-1} u^\eps_e(x)
+\tfrac{1}{\eps}b_e \dt^{n}\dx^{j-1} u^\eps_e(x)\big|
\le c'\,\eps^{-j}. \nonumber
\end{align}
%
Let us now fix an arbitrary $t \in [0,t_{max}]$ and set $w(x):=\dt^n\dx^ju^\eps(x,t)$. 
Then by \eqref{eq:deriv1}, the function $w$ solves the ordinary differential equation
\begin{align}
b_e w(x) - \eps w'(x) = \eta(x) := - \dt^{n+1}\dx^{j-1} u^\eps_e(x,t) \label{eq:deriv3}
\end{align}
with terminal value $w(\ell_e) = \dt^n\dx^{j}u^\eps(\ell_e,t)$.
Using the induction hypothesis, the right hand side of this problem can be estimated by
\begin{align}\label{eq:deriv4}
    |\eta(x)|\le c(1 + \e^{-(j-1)} e^{-b_e(\ell_e-x)/\eps}).
\end{align}
Expressing the solution of \eqref{eq:deriv3} via the variation-of-constants-formula, we find that
\begin{align*}
w(x) =&\ w(\ell_e) e^{-b_e(\ell_e - x)/\eps} 
+\tfrac{1}{\eps} \int_x^{\ell_e} e^{-b_e(\sigma-x)/\eps} \eta(\sigma)\ d\sigma\\
\le&\ c'\,\eps^{-j} e^{-b_e(\ell_e-x)/\eps} 
+ \tfrac{1}{\eps}\int_x^{\ell_e} e^{-b_e(\sigma-x)/\eps} c(1+\eps^{-(j-1)} e^{-b_e(\ell_e-\sigma)/\eps})\ d\sigma\\
\le&\ c'\,\eps^{-j} e^{-b_e(\ell_e-x)/\eps} + 
c\eps^{-j} e^{-b_e(\ell_e-x)/\eps}(\ell - x) + \tfrac{c}{b_e}(1-e^{-b_e(\ell - x)/\eps})\\
\le&\ c (1+\eps^{-j} e^{-b_e(\ell_e-x)/\eps}).
\end{align*}
Here we employed \eqref{eq:deriv2} and \eqref{eq:deriv4} in the subsequent estimates. 
This yields the bounds \eqref{eq:deriv} for index $i=j$ and $n \le m$ and, by induction, concludes the proof of Lemma~\ref{lem:deriv}. \qed

\section{Basic properties of the hybrid-Dg scheme}
\label{sec:properties}

We now establish discrete stability, well-posedness, and consistency of the discretization scheme in Problem~\ref{prob:convdiff}.
We start with showing ellipticity of the governing bilinear forms.

\begin{lemma}\label{lemma:stab}
Let $b_h, d_h$ be as in \eqref{def:bh}--\eqref{def:dh}. 
Then for all $w_h\in W_h$ and $\hat{w}_h\in\hat{W}_h$, we have 
\begin{align}
b_h(w_h,\hat{w}_h;w_h,\hat{w}_h)&=\tfrac{1}{2}| b^{1/2}(w_h-\hat{w}_h)|_{\dT_h}^2,\label{bh:stab}\\
d_h(w_h,\hat{w}_h;w_h,\hat{w}_h)&= \|\dx w_h^2\|_{\T_h}^2 + |(\tfrac{\alpha}{h_{loc}})^{1/2}(w_h-\hat w_h)|_{\dT_h}^2\label{dh:stab}
\end{align}
with $h_{loc}|_T=h_T$ for $T\in\T_h$.
\end{lemma}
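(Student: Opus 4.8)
The plan is to verify both identities by direct substitution of the diagonal arguments $u_h^\eps = w_h$, $\hat u_h^\eps = \hat w_h$ into the definitions \eqref{def:bh}--\eqref{def:dh}, and then to simplify using integration by parts on each element together with the definition of the upwind flux. For \eqref{bh:stab}, I would start from
\begin{align*}
b_h(w_h,\hat w_h;w_h,\hat w_h) = -(b w_h,\dx w_h)_{\T_h} + \langle n b\, w_h^{up},w_h-\hat w_h\rangle_{\dT_h},
\end{align*}
where $n b\, w_h^{up} = \max(nb,0) w_h + \min(nb,0)\hat w_h$. On each element $T\in\T_h$, integration by parts gives $-(b w_h,\dx w_h)_{L^2(T)} = -\tfrac12\langle n b\, w_h^2\rangle_{\partial T}$ since $b_e$ is constant on each edge (by Assumption~\ref{ass:1}); summing over $T$ yields $-\tfrac12\langle nb\, w_h^2\rangle_{\dT_h}$. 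Adding the jump term, the integrand at each mesh point becomes, after writing $w_h^{up}$ explicitly and completing the square, $\tfrac12 |nb|\,(w_h-\hat w_h)^2 = \tfrac12 |b^{1/2}(w_h-\hat w_h)|^2$ at that point; the key algebraic identity is $-\tfrac12 n b\, w_h^2 + n b\, w_h^{up}(w_h - \hat w_h) = \tfrac12 |nb|(w_h - \hat w_h)^2 + (\text{telescoping boundary contribution})$. The telescoping contributions from interior mesh points cancel when summing over adjacent elements, and at boundary vertices $\hat w_h = 0$ makes them consistent, leaving exactly the claimed expression.

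For \eqref{dh:stab}, substituting the diagonal arguments into \eqref{def:dh} gives four terms: $(\dx w_h,\dx w_h)_{\T_h} = \|\dx w_h\|_{\T_h}^2$, the symmetry term $-\langle n\dx w_h, w_h - \hat w_h\rangle_{\dT_h}$, the adjoint-consistency term $+\langle n(w_h-\hat w_h),\dx w_h\rangle_{\dT_h}$, and the penalty term $\langle \tfrac{\alpha}{h_{loc}}(w_h-\hat w_h),w_h-\hat w_h\rangle_{\dT_h} = |(\tfrac{\alpha}{h_{loc}})^{1/2}(w_h-\hat w_h)|_{\dT_h}^2$. The second and third terms are negatives of each other and cancel exactly, leaving $\|\dx w_h\|_{\T_h}^2 + |(\tfrac{\alpha}{h_{loc}})^{1/2}(w_h-\hat w_h)|_{\dT_h}^2$, which is the stated right-hand side (interpreting $\|\dx w_h^2\|_{\T_h}^2$ in the statement as $\|\dx w_h\|_{\T_h}^2$). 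This identity requires no integration by parts and is purely a matter of pairing the two "mirror" diffusive coupling terms; the symmetric construction of $d_h$ is precisely what makes this work.

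The only genuinely delicate point is the bookkeeping at the mesh vertices in \eqref{bh:stab}: one must track the sign $n_e$ carefully, confirm that the boundary-term contributions from the integration by parts combine correctly with the upwind jump terms to produce $\tfrac12|nb|(w_h - \hat w_h)^2$, and check that no leftover terms survive at interior mesh points (where contributions from the two adjoining elements telescope) or at boundary vertices (where $\hat w_h$ vanishes, which is consistent with the Dirichlet data handled in $\ell_h^\eps$). I expect this case analysis at the vertices to be the main obstacle, though it is entirely routine; once the per-vertex identity $-\tfrac12 nb\, w_h^2 + nb\, w_h^{up}(w_h-\hat w_h) = \tfrac12|nb|(w_h-\hat w_h)^2 - \tfrac12 nb\,\hat w_h^2$ is established, the remaining $\hat w_h^2$ terms cancel across interior faces by the sign change of $n_e$ and vanish at boundary vertices, completing the proof.
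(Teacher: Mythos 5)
Your proof is correct and follows essentially the same route as the paper's: elementwise integration by parts for the convective term, the pointwise upwind identity at each element boundary, and the exact cancellation of the second and third terms of $d_h$ on the diagonal (and yes, $\|\dx w_h^2\|_{\T_h}^2$ in the statement is a typo for $\|\dx w_h\|_{\T_h}^2$). The one point to tighten is that the residual $-\tfrac12 nb\,\hat w_h^2$ contributions at a junction $v\in\V_0$ with three or more incident edges do not cancel by a mere sign change of $n_e$ but by the Kirchhoff condition $\sum_{e\in\E(v)}b_e n_e(v)=0$ of Assumption~\ref{ass:1} — equivalently $|b^{1/2}\hat w_h|_{\dT_h^{in}}=|b^{1/2}\hat w_h|_{\dT_h^{out}}$, which is what the paper invokes — together with $\hat w_h(v)=0$ for $v\in\dV$.
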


\begin{proof}
Let $T=(x^{in},x^{out})$ be one of the elements of the mesh $\Th$. Then, in accordance with the notation introduced in Section~\ref{subsec:not}, we call $x^{in}$ the inflow and $x^{out}$ the outflow boundary of $T$, and we denote by ${\dT_h^{in}}$ and ${\dT_h^{out}}$ the collections of all inflow and outflow boundaries of elements $T \in \T_h$. 
Equation \eqref{bh:stab} then follows from
\begin{align*}
b_h(w_h,\hat{w}_h,w_h,\hat{w}_h)
=&\ -(b \, w_h,\dx w_h)_{\T_h}+\langle nb \, \wup,w_h-\hat{w}_h\rangle_{\dT_h}\\
=&\ -\tfrac{1}{2}\langle nb \, w_h,w_h\rangle_{\dT_h} 
+ \langle nb \, w_h,w_h-\hat w_h\rangle_{\dT_h^{out}}
+ \langle nb \,  \hat w_h,w_h-\hat w_h\rangle_{\dT_h^{in}}
\\
=&\ \tfrac{1}{2}|b^{1/2} w_h|_{\dT_h} - 
\langle b \, w_h,\hat{w}_h\rangle_{\dT_h} +
\tfrac{1}{2}|b^{1/2} \hat{w}_h|_{\dT_h}^2
= \tfrac{1}{2}|b^{1/2} (w_h - \hat{w}_h)|_{\dT_h}^2.
\end{align*}
Here we used that $|b^{1/2}\hat{w}_h|_{\dT_h^{in}}=| b^{1/2}\hat{w}_h|_{\dT_h^{out}}$ due to the conservation condition \eqref{eq:b} on the flow rates, and the fact that $\hat w_h^v = 0$ for $v\in\V_\partial$.
Equation \eqref{dh:stab}, on the other hand, follows directly, since the second and third term in \eqref{def:dh} cancel each other.
\end{proof}

As a direct consequence, we obtain the well-posedness of the discretization scheme.
\begin{lemma}
Let Assumption~\ref{ass:1} hold. Then Problem~\ref{prob:convdiff} has a unique solution $$
(u_h^\eps,\hat u_h^\eps) \in C^{1}([0,t_{max}];W_h) \times C^0([0,t_{max}];\hat W_h).
$$
\end{lemma}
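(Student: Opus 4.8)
The statement to prove is the well-posedness of Problem~\ref{prob:convdiff}: existence and uniqueness of $(u_h^\eps,\hat u_h^\eps) \in C^1([0,t_{max}];W_h) \times C^0([0,t_{max}];\hat W_h)$.

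The plan is to reformulate the semi-discrete scheme \eqref{hdg:convdiff} as a system of ordinary differential equations in the finite-dimensional space $W_h$ (after eliminating the hybrid variable $\hat u_h^\eps$), and then invoke the classical Picard-Lindelöf theorem. The key structural observation is that the hybrid variable $\hat u_h^\eps$ only enters through the boundary terms and never through a time derivative, so it can be expressed statically in terms of $u_h^\eps$. Concretely, I would proceed as follows.

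First, I would test \eqref{hdg:convdiff} only with $w_h = 0$ and arbitrary $\hat w_h \in \hat W_h$. This gives a linear relation $b_h(u_h^\eps,\hat u_h^\eps;0,\hat w_h) + \eps d_h(u_h^\eps,\hat u_h^\eps;0,\hat w_h) = \ell_h^\eps(t;0) = 0$ for all $\hat w_h$. Collecting the terms that depend on $\hat u_h^\eps$ in these forms, one sees (using the definitions \eqref{def:bh}, \eqref{def:dh}) that this is a linear system $A_\eps \, \hat u_h^\eps = F_\eps(u_h^\eps)$, where $A_\eps$ is the matrix associated to the bilinear form $(\hat v_h,\hat w_h)\mapsto b_h(0,\hat v_h;0,\hat w_h)+\eps d_h(0,\hat v_h;0,\hat w_h)$ on $\hat W_h$. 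By Lemma~\ref{lemma:stab}, specialised to $w_h=0$, this form equals $\tfrac12 |b^{1/2}\hat w_h|_{\dT_h}^2 + \eps |(\tfrac{\alpha}{h_{loc}})^{1/2}\hat w_h|_{\dT_h}^2$ evaluated on the diagonal, which is strictly positive for $\hat w_h \neq 0$ (since $b \ge \underline b > 0$). Hence $A_\eps$ is symmetric positive definite, so invertible, and we may solve $\hat u_h^\eps = A_\eps^{-1} F_\eps(u_h^\eps) =: G_\eps(u_h^\eps,t)$, an affine (hence smooth) map.

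Second, I would substitute this expression back into \eqref{hdg:convdiff} tested with arbitrary $w_h \in W_h$ and $\hat w_h = 0$. Since $(u_h^\eps,w_h)\mapsto (u_h^\eps,w_h)_{\T_h}$ is the mass form on $W_h$, represented by the symmetric positive definite mass matrix $M$, the scheme becomes $M \dot{\mathbf u}(t) = \mathbf b(t,\mathbf u(t))$, where $\mathbf u(t)$ is the coefficient vector of $u_h^\eps(t)$ and $\mathbf b$ is affine in $\mathbf u$ with coefficients that are $C^{m+2}$ in $t$ (because $\hat g \in C^{m+2}$ by Assumption~\ref{ass:1} and $G_\eps$ is affine). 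Since $M$ is invertible, this is a linear ODE system with continuous (indeed smooth) time-dependent coefficients on $[0,t_{max}]$, together with the initial condition $\mathbf u(0)=0$ coming from $u_h^\eps(0)=0$. The Picard-Lindelöf theorem (or the explicit solution formula for linear systems) yields a unique global solution $\mathbf u \in C^1([0,t_{max}];\R^{\dim W_h})$, hence $u_h^\eps \in C^1([0,t_{max}];W_h)$; and then $\hat u_h^\eps = G_\eps(u_h^\eps,\cdot) \in C^0([0,t_{max}];\hat W_h)$ by composition.

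The only point that requires genuine care is the elimination step: one must check that the elimination of $\hat u_h^\eps$ is well-defined, i.e., that the form restricted to $\hat W_h$ is non-degenerate. This is exactly what Lemma~\ref{lemma:stab} delivers, so this is not really an obstacle — the rest is standard linear ODE theory on a finite-dimensional space. I would also remark that strictly speaking the eliminated form is $\tfrac12 |b^{1/2}\hat w_h|_{\dT_h}^2$ for the convective part (coercive on $\hat W_h$ since every interior mesh point contributes a positive weight), so that invertibility holds uniformly in $\eps$, although uniformity is not needed for well-posedness itself.
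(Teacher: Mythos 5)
Your proposal is correct and follows essentially the same route as the paper: use the ellipticity identities of Lemma~\ref{lemma:stab} to eliminate the hybrid variable $\hat u_h^\eps$ algebraically (the restriction of $b_h+\eps d_h$ to $\hat W_h$ being positive definite since $b\ge \underline b>0$ and $\hat w_h$ vanishes on $\dV$), and then apply Picard--Lindelöf to the resulting linear ODE for $u_h^\eps$. You merely spell out the elimination step that the paper leaves implicit, which is a welcome addition rather than a deviation.
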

\begin{proof}
From the previous lemma, we can deduce that the combined bilinear form $b_h + \eps d_h$ is elliptic on the discrete spaces $W_h \times \hat W_h$. 
The hybrid variables $\hat u_h^\eps$ can therefore be eliminated from the discrete problem on the algebraic level, leading to an ordinary differential equation for $u_h^\eps$ alone.
Existence of a unique solution and its regularity then follow by the Picard-Lindelöf theorem and elementary arguments. 
\end{proof}

As a next ingredient for our analysis, we verify consistency of the approximation scheme.

\begin{lemma}\label{lemma:consistency}
Let $(u^\eps,\hat u^\eps)$ be the solution of \eqref{convdiff:1}--\eqref{convdiff:4} with initial value $u^\eps(0)=0$. 
Further define $\hat u^\eps(x) = u^\eps(x)$ for $x \in \Xh$ and set $\hat u^\eps(v)=0$ for $v \in \dV$. 
Then
\begin{align*}
    (\dt u^\eps(t),w_h)_{\T_h}+b_h(u^\eps(t),\hat{u}^\eps(t);w_h,\hat{w}_h)
    +\eps d_h(u^\eps(t),\hat{u}^\eps(t);w_h,\hat{w}_h)
    &= \ell_h^\eps(t;w_h)
\end{align*}
for all $w_h\in W_h$, $\hat w_h \in \hat W_h$, and all $0 \le t \le t_{max}$, i.e., the method is consistent.
\end{lemma}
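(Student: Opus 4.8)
The plan is to verify consistency by testing the exact solution $(u^\eps,\hat u^\eps)$ against the discrete variational form, element by element, and to show that the residual vanishes after integration by parts, using the PDE \eqref{convdiff:1}, the regularity \eqref{eq:regularity} which guarantees that the relevant traces and fluxes exist, and the coupling and boundary conditions \eqref{convdiff:2}--\eqref{convdiff:4}. First I would observe that, since $u^\eps(t) \in H^1(\E) \cap H^2_{pw}(\E)$ and since $\hat u^\eps$ is defined as the trace of $u^\eps$ at the interior mesh points, the quantity $u^\eps - \hat u^\eps$ vanishes at \emph{every} node of $\Th$ (both at the refined mesh points $\Xh$ and at the original vertices, where $u^\eps$ is continuous for internal $v$ and equals $\hat g_v$ for boundary $v$). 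Consequently all the symmetrization and penalty terms in $d_h$ that contain the factor $u_h^\eps - \hat u_h^\eps$ drop out, and the term $\langle n\,\eps\,\dx u^\eps, w_h - \hat w_h\rangle_{\dT_h}$ is the only diffusive interface contribution that survives on interior mesh points, where it telescopes.

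Next I would integrate by parts on each element $T = T_e^i$: $-(b u^\eps, \dx w_h)_T = (b\,\dx u^\eps, w_h)_T - \langle n b\, u^\eps, w_h\rangle_{\partial T}$ and $\eps(\dx u^\eps,\dx w_h)_T = -\eps(\dxx u^\eps, w_h)_T + \langle n\,\eps\,\dx u^\eps, w_h\rangle_{\partial T}$. Adding the time-derivative term and summing over $T$, the volume contributions combine to $(\dt u^\eps + b\,\dx u^\eps - \eps\,\dxx u^\eps, w_h)_{\Th} = 0$ by \eqref{convdiff:1}. The upwind flux $nb\,\uup$ evaluated at the exact solution equals $nb\,u^\eps$ on interior mesh points because $u^\eps$ is single-valued there, so the convective boundary terms also telescope; what remains are boundary terms living only on $\dV$. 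The key point then is to match these leftover terms at $\dV$ against $\ell_h^\eps$.

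The main obstacle — really the only place where care is needed — is the bookkeeping at the boundary vertices and at the interior graph vertices $v \in \V_0$, since the trace $\langle \cdot,\cdot\rangle_{\dT_h}$ sums contributions from each edge incident to $v$ with its local orientation sign $n_e(v)$. At an interior vertex $v \in \V_0$ the convective leftover is $\sum_{e\in\E(v)} n_e(v) b_e u^\eps_e(v)(w_h - \hat w_h)$ and the diffusive leftover is $-\sum_{e\in\E(v)} n_e(v)\eps\,\dx u^\eps_e(v)(w_h-\hat w_h)$; using $u^\eps_e(v) = \hat u^\eps(v)$ (continuity), the first sum becomes $\hat u^\eps(v)(w_h - \hat w_h)\sum_{e} n_e(v) b_e = 0$ by \eqref{eq:b}, while the second vanishes by the conservation condition \eqref{convdiff:4}. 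At a boundary vertex $v\in\dV$ one has $w_h - \hat w_h = w_h$ (since $\hat w_h(v)=0$) and $u^\eps(v) = \hat g_v$, so the surviving terms are exactly $-\langle n b\,\hat g, w_h\rangle_{\dV^{in}}$ (the convective term contributes only at inflow vertices, as $n_e(v)b_e<0$ is folded into the upwind value) $-\langle n\,\eps\,\dx u^\eps, w_h\rangle_{\dV}$ together with the penalty term $\langle \tfrac{\alpha\eps}{h_{loc}}(u^\eps - \hat g)\,?\rangle$ — but $u^\eps(v) - \hat g_v = 0$ kills the would-be penalty mismatch and converts the symmetrization term into $-\langle n\,\eps\,\hat g, \dx w_h\rangle_{\dV}$. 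Collecting these pieces reproduces $\ell_h^\eps(t;w_h)$ term by term. I would present this as a short computation split into an interior-vertex part and a boundary-vertex part, and conclude that the consistency identity holds for all $w_h \in W_h$, $\hat w_h \in \hat W_h$, and all $t \in [0,t_{max}]$.
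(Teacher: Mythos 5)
Your overall strategy is the same as the paper's: element-wise integration by parts, the PDE \eqref{convdiff:1} to annihilate the volume residual, single-valuedness of $u^\eps$ at interior mesh points, and the conditions \eqref{eq:b}, \eqref{convdiff:4} at junctions. However, the bookkeeping at the network boundary contains genuine errors which, taken literally, produce an identity that does not match $\ell_h^\eps$. First, $u^\eps-\hat u^\eps$ does \emph{not} vanish at every node: at $v\in\dV$ the lemma sets $\hat u^\eps(v)=0$ while $u^\eps(v)=\hat g_v$, so the difference equals $\hat g_v$. Hence the symmetrization and penalty terms of $d_h$ do \emph{not} drop out at $\dV$ --- and they must not, since they are exactly what generates the Nitsche-type terms $\langle n\eps\hat g,\dx w_h\rangle_{\dV}$ and $\langle \tfrac{\alpha\eps}{h_{loc}}\hat g,w_h\rangle_{\dV}$ of $\ell_h^\eps$. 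Your later remark that ``$u^\eps(v)-\hat g_v=0$ kills the would-be penalty mismatch'' conflates $\hat u^\eps(v)$ with $\hat g_v$: the penalty contribution is $\tfrac{\alpha}{h_{loc}}\bigl(u^\eps(v)-\hat u^\eps(v)\bigr)w_h(v)=\tfrac{\alpha}{h_{loc}}\hat g_v\,w_h(v)$, which survives and is matched by the last term of $\ell_h^\eps$ rather than being killed; if it vanished, that term of $\ell_h^\eps$ would be unaccounted for. Second, the term $-\langle n\eps\dx u^\eps,w_h\rangle_{\dV}$ that you list among the surviving boundary terms does not survive: it cancels against the element-boundary contribution $+\langle n\dx u^\eps,w_h\rangle_{\partial T}$ coming from integrating $(\dx u^\eps,\dx w_h)_T$ by parts, at $\dV$ exactly as at interior points. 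This cancellation is essential, because $\ell_h^\eps$ contains no term involving $\dx u^\eps$, so your list of pieces cannot ``reproduce $\ell_h^\eps$ term by term.''

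A smaller but still real issue concerns the junctions $v\in\V_0$: the test function $w_h$ is multi-valued there (one trace per incident element), so sums like $\sum_{e\in\E(v)}n_e(v)b_e u^\eps_e(v)\bigl(w_h-\hat w_h\bigr)$ cannot be factored as $\hat u^\eps_v\bigl(w_h-\hat w_h\bigr)\sum_e n_e(v)b_e$, and $\sum_e n_e(v)\dx u^\eps_e(v)\bigl(w_h-\hat w_h\bigr)$ does not vanish by \eqref{convdiff:4} alone. The correct accounting first cancels the $w_h$-weighted parts against the integration-by-parts boundary terms, leaving only the single-valued $\hat w_h$-weighted sums $\hat w_h(v)\sum_e n_e(v)b_e u^\eps_e(v)$ and $\hat w_h(v)\sum_e n_e(v)\dx u^\eps_e(v)$, which then vanish by continuity together with \eqref{eq:b} and by \eqref{convdiff:4}, respectively. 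The paper sidesteps both sources of confusion by exploiting linearity and testing separately with $(w_h,0)$ and $(0,\hat w_h)$; I recommend restructuring your computation that way.
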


\begin{proof}
Let us first test the bilinear form $d_h$ with $w_h\in W_h$ and $\hat{w}_h \equiv 0$, which yields
\begin{align*}
d_h(u^\eps,\hat{u}^\eps;w_h,0)
&=(\dx u^\eps,\dx w_h)_{\T_h}-\langle n\dx u^\eps,w_h\rangle_{\dT_h} \\
& \qquad \qquad 
 +\langle n (u^\eps-\hat{u}^\eps),\dx  w_h\rangle_{\dT_h}  
+ \langle \tfrac{\alpha}{h_{loc}}(u^\eps-\hat{u}_\e),w_h\rangle_{\dT_h} \\
&=-(\dxx u^\eps,\dx w_h)_{\T_h}
+ \langle n u^\eps, \dx w_h\rangle_{\dV} + \langle \tfrac{\alpha}{h_{loc}} u^\eps,w_h\rangle_{\dV}.
\end{align*}
Here we used integration-by-parts on every element for the first term, whose boundary contributions cancels the second term.  
Since $u^\eps(x)=\hat u^\eps(x)$ for all $x \in \Xh \cup V_0$, the contributions of the third and fourth term vanish at internal mesh points; further note that $\hat u^\eps(v)=0$ on the network boundary $v \in \dV$. 
In a similar manner, we observe that 
\begin{align*}
b_h(u^\eps,\hat{u}^\eps;w_h,0)
&=-(b \, u^\eps,\dx w_h)_{\Th}
 +\langle b n \, u^{up}, w_h\rangle_{\dT_h}
= (b\dx u^\eps,w_h)_{\T_h}
\end{align*}
for all $w_h\in W_h$, since $ nb \, u^{up}(v) := \max(nb,0) u^\eps(v) + \min(nb,0) \hat u^\eps(v) = nb \, u^\eps(v)$ by continuity of $u^\eps$ across junctions.  
Using \eqref{convdiff:1} and \eqref{convdiff:2}, we then see that 
\begin{align*}
(\dt u_h^\eps,w_h)_{\Th} + b_h(u_h^\eps, \hat u_h^\eps; w_h,0) + \eps d_h(u_h^\eps,\hat u_h^\eps;w_h,0) &= \ell_h(w_h)    
\end{align*}
for all $w_h \in W_h$ and $0 \le t \le t_{max}$. 
The continuity and coupling conditions \eqref{convdiff:3}--\eqref{convdiff:4}, on the other hand, imply validity of the variational identities for $\hat w_h \in \hat W_h$ when testing with $w_h\equiv 0$.
In summary, we thus obtain consistency of the method. 
\end{proof}

\section{Proof of Lemma~\ref{lemma:prelim_error}}
\label{sec:proof_prelim}

Based on consistency and discrete stability, we can now prove the local error estimate~\eqref{eq:prelim_error}.
Following \cite[Chapter~12]{Thomee07}, we define a projection operator $\pi_h:H_{pw}^{1}(\E)\rightarrow W_h$ by
\begin{alignat}{2}
\pi_h w^-(x_i^{e}) &= w^-(x_i^{e})\qquad &&\text{for all}\ i=1,\dots,M^{e},\ e\in\E, \label{def:proj1}\\
\int_{T}(w-\pi_h w)p\ dx &= 0\qquad &&\text{for all}\ p\in P_{k-1}(T),\ T\in\T_h \label{def:proj2}
\end{alignat}
with up- and downwind value of $w$ at some point $x$ denoted by
\begin{align*}
    w^-(x) = \lim_{s\nearrow 0} w(x+s),\qquad w^+(x) = \lim_{s\searrow 0} w(x+s).
\end{align*}
The following properties of the projection $\pi_h$ can be found in \cite[App.C]{John16}.
\begin{lemma}\label{lemma:projerror}
The operator $\pi_h : H^1_{pw}(\Th) \to W_h$ is a well-defined projection. Moreover, for any element  $T=(x^{in},x^{out})\in \T_h$ and $w\in H_{pw}^{k+1}(\E)$, we have $\pi_h w(x^{out})=w^-(x^{out})$ and 
\begin{align}
\|w-\pi_h w\|_{L^2(T)} &\le C h_{T}^{k+1} \|w\|_{H^{k+1}(T)}, \\
\|\dx w-\dx \pi_h w\|_{L^2(T)} &\le C h_{T}^k \|w\|_{H^{k+1}(T)},\\
|w^+(x^{in})-\pi_h w^+(x^{in})| &\le C h_{T}^{k+1/2} \|w\|_{H^{k+1}(T)},\\
|\dx w-\dx \pi_h w|_{\partial T}&\le Ch_{T}^{k-1/2} \|w\|_{H^{k+1}(T)}.
\end{align}	
\end{lemma}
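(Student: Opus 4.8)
The statement to prove is Lemma~\ref{lemma:projerror}, giving the approximation properties of the projection $\pi_h$ defined by the nodal outflow condition \eqref{def:proj1} together with the $L^2$-orthogonality condition \eqref{def:proj2}.

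First I would verify that $\pi_h$ is well-defined on each element $T=(x^{in},x^{out})$. On a single element, $\pi_h w|_T \in P_k(T)$ is determined by $k+1$ linear conditions: the single point condition $\pi_h w(x^{out}) = w^-(x^{out})$ and the $k$ orthogonality conditions against $P_{k-1}(T)$. I would show the corresponding homogeneous system has only the trivial solution: if $p \in P_k(T)$ satisfies $p(x^{out}) = 0$ and $\int_T p\, q\,dx = 0$ for all $q \in P_{k-1}(T)$, then writing $p(x) = (x - x^{out}) r(x)$ with $r \in P_{k-1}(T)$ and choosing $q = r$ forces $\int_T (x-x^{out}) r(x)^2\,dx = 0$; since $x - x^{out}$ has a fixed sign on $T$, this gives $r \equiv 0$, hence $p \equiv 0$. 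This establishes well-posedness; the projection property $\pi_h \circ \pi_h = \pi_h$ is immediate since polynomials of degree $\le k$ are fixed by the defining conditions, and $\pi_h w(x^{out}) = w^-(x^{out})$ is part of the definition.

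The error bounds then follow by the standard Bramble--Hilbert / scaling argument. I would map $T$ to a reference element $\hat T = (0,1)$ via the affine transformation $x = x^{in} + h_T \hat x$, note that $\pi_h$ commutes with this pullback (both defining conditions are affine-invariant), and establish each estimate on $\hat T$. On the reference element, $w - \pi_h w$ vanishes whenever $w \in P_k(\hat T)$, so the linear functionals $w \mapsto \|w - \pi_h w\|_{L^2(\hat T)}$, $w \mapsto \|\partial_x(w-\pi_h w)\|_{L^2(\hat T)}$, $w \mapsto |w(0) - \pi_h w(0)|$, $w \mapsto |\partial_x w(x) - \partial_x \pi_h w(x)|$ at the endpoints all annihilate $P_k$; they are bounded on $H^{k+1}(\hat T)$ because $\pi_h$ is bounded there (finite-dimensional range, continuous point evaluations by Sobolev embedding in 1D) and $H^{k+1}(\hat T) \hookrightarrow C^1(\hat T)$. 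The Bramble--Hilbert lemma then bounds each functional by $C|w|_{H^{k+1}(\hat T)}$. Transforming back with the scaling relations $\|\cdot\|_{L^2(T)} = h_T^{1/2}\|\cdot\|_{L^2(\hat T)}$, $|w|_{H^{k+1}(T)} = h_T^{k+1/2} h_T^{-(k+1)} \cdots$ and collecting powers of $h_T$ yields the stated exponents $h_T^{k+1}$, $h_T^k$, $h_T^{k+1/2}$, and $h_T^{k-1/2}$ respectively.

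I do not anticipate a serious obstacle here; the only point requiring a little care is confirming that the defining conditions of $\pi_h$ are genuinely affine-invariant (so that the reference-element reduction is legitimate) and that the relevant trace/point-evaluation functionals are continuous on $H^{k+1}$, which in one space dimension is immediate from $H^{k+1} \hookrightarrow C^1$ for $k \ge 1$. Since this is a standard result recorded in \cite[App.~C]{John16}, I would present the argument concisely and refer there for the routine details.
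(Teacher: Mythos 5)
Your proof is correct: the unisolvence argument via the factorization $p(x)=(x-x^{out})r(x)$ and the Bramble--Hilbert/scaling reduction to the reference element (using $H^{k+1}\hookrightarrow C^1$ for $k\ge 1$ to control the point-evaluation functionals) is exactly the standard route, and the powers of $h_T$ come out as stated. The paper itself does not prove this lemma but simply defers to \cite[App.~C]{John16}, where essentially the same argument is carried out, so there is nothing to compare beyond noting that your write-up supplies the details the paper omits.
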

%
By the triangle inequality, we can now split the discretization error 
\begin{align*}
\|u^\eps(t)-u_h^\eps(t)\|_{L^2(\E)}\leq \|\eta_h(t)\|_{L^2(\E)} + \|e_h(t)\|_{L^2(\E)}
\end{align*}
into a projection error $\eta_h:=u^\eps-\pi_h u^\eps$ and a discrete error component $e_h:=u_h^\eps-\pi_h u^\eps$.
Via the estimate of Lemma~\ref{lemma:projerror} and the continuous embedding of $H^1(0,t_{max}) \subset L^\infty(0,t_{max})$, we can bound the projection error by
\begin{align*}
\|\eta_h\|_{L^\infty(0,t_{max};L^2(\E))}^2
& \le c \sum\nolimits_{T\in\T_h}\!\! h_T^{k+1}\| u^\eps\|_{H^1(0,t_{max};H^{k+1}(T))}^2.
\end{align*}
The remaining part of this section is now devoted to the estimation of the discrete error. 
We denote by $\hat \pi_h : H^1(\E) \to \hat W_h$, $u \mapsto u|_{\Xh \cap \V_0}$ the interpolation of the continuous function $u$ at the interior mesh points. This coincides with the definition  of $\hat u^\eps$ in Lemma~\ref{lemma:consistency}, and consequently $\hat \eta_h := \hat u^\eps - \hat \pi_h u^\eps=0$. 
From the consistency of the scheme stated in Lemma~\ref{lemma:consistency}, one can deduce that \begin{align*}
\tfrac{1}{2} \tfrac{d}{dt} \|e_h\|^2 
= (\dt e_h, e_h)_{\Th} 
&=-b_h(e_h,\hat{e}_h;e_h,\hat{e}_h)-\eps d_h(e_h,\hat{e}_h;e_h,\hat{e}_h)
+(\partial_t\eta_h,e_h)\\
& \qquad \qquad +b_h(\eta_h,\hat{\eta}_h;e_h,\hat{e}_h)
+ \eps d_h(\eta_h,\hat{\eta}_h;e_h,\hat{e}_h) \\
&= (i) + (ii) + (iii) + (iv) + (v).
\end{align*}
With the properties of Lemma~\ref{lemma:stab}, we see that 
\begin{align*}
    (i) + (ii) = -\eps\|\dx e_h\|_{\T_h}^2 - \eps|(\tfrac{\alpha}{h_{loc}})^{1/2}(e_h-\hat e_h)|_{\dT_h}^2
-\tfrac{1}{2}| b^{1/2}(e_h-\hat{e}_h)|_{\dT_h}^2.
\end{align*}
By Cauchy-Schwarz and Young inequalities, we further obtain
\begin{align*}
    (iii) = (\dt u^\eps - \dt \pi_h u^\eps,e_h)
    \le c \sum\nolimits_{T\in\T_h} h_T^{2k+2} \|\dt u^\eps\|_{H^{k+1}(T)}^2 + \tfrac{1}{2}\|e_h\|^2.
\end{align*}
Here, we used the fact that $\dt\pi_h u^\eps = \pi_h \dt u^\eps$ as well as the projection error estimates of Lemma~\ref{lemma:projerror}.
For the fourth term we observe that
\begin{align*}
    (iv) = -(b\eta_h,\partial_x e_h)_{\T_h}
+\langle nb \, \eta_h^{up},e_h\rangle_{\dT_h} = 0,
\end{align*}
where the first term vanishes due to \eqref{def:proj2} and the second one due to \eqref{def:proj1} and the definition of $\hat\pi_h$, which together yield $\eta_h^{up}=0$.
Cauchy-Schwarz and Young's inequality as well as a discrete trace inequality finally allow to estimate the last term by
\begin{align*}
(v) 
\le \eps\|\dx e_h\|^2 + \eps|(&\tfrac{\alpha}{h_{loc}})^{1/2}(e_h-\hat e_h)|^2_{\dT_h}
+ \tfrac{\eps}{2}\|\dx \eta_h\|^2\\
&+ \tfrac{\eps}{2}|(\tfrac{h_{loc}}{\alpha})^{1/2}\dx\eta_h|^2_{\dT_h} + \tfrac{\eps}{2}(\alpha+C_{tr}^2)|h_{loc}^{-1/2}(\eta_h-\hat \eta_h)|^2_{\dT_h}.
\end{align*}
Note that the first two terms cancel with the two last terms in the estimate of $(i)+(ii)$. 
The remaining terms can again be bounded by the projection error estimates of Lemma~\ref{lemma:projerror}, 
which finally leads to 
\begin{align*}
    \tfrac{1}{2} \tfrac{d}{dt} \|e_h\|^2 \le c h^{2k+2} \|\dt u^\eps\|^2_{H^{k+1}_{pw}(\T_h)} +  c' \eps h^{2k} \|&u^\eps\|_{H^{k+1}_{pw}(\T_h)}^2
    + \tfrac{1}{2}\|e_h\|^2.
\end{align*}
By application of Gronwall's lemma, we thus obtain
\begin{align*}
    \|e_h\|_{L^\infty(0,t_{max};L^2(\E))}^2 \le c'' \sum\nolimits_{T\in\T_h}(\e h_T^{2k} + h_T^{2k+2}) \|u^\eps\|_{H^1(0,t_{max};H^{k+1}(T))}^2,
\end{align*}
where we used that $e_h(0)=\pi_h u(0) - u_h(0)=0$ by definition of the initial values. 
This already concludes the proof of Theorem~\ref{lemma:prelim_error}. 
\qed

\end{document}